\numberwithin{equation}{section}
\newtheorem{Theorem}{Theorem}[section]
\newtheorem{Corollary}[Theorem]{Corollary}
\newtheorem{Proposition}[Theorem]{Proposition}
\newtheorem{Conjecture}[Theorem]{Conjecture}
 { \theoremstyle{definition}
\newtheorem{Definition}[Theorem]{Definition}

\newtheorem{Example}[Theorem]{Example}
\newtheorem{Remark}[Theorem]{Remark} }
\newcommand*{\Mishchenko}{Mi\v{s}\v{c}enko}
\begin{document}

\allowdisplaybreaks

\newcommand{\arXivNumber}{2008.13754}

\renewcommand{\thefootnote}{}

\renewcommand{\PaperNumber}{127}

\FirstPageHeading

\ShortArticleName{Width, Largeness and Index Theory}

\ArticleName{Width, Largeness and Index Theory\footnote{This paper is a~contribution to the Special Issue on Scalar and Ricci Curvature in honor of Misha Gromov on his 75th Birthday. The full collection is available at \href{https://www.emis.de/journals/SIGMA/Gromov.html}{https://www.emis.de/journals/SIGMA/Gromov.html}}}

\Author{Rudolf ZEIDLER}

\AuthorNameForHeading{R.~Zeidler}

\Address{Mathematical Institute, University of M\"unster, Einsteinstr.~62, 48149 M\"unster, Germany}
\Email{\href{mailto:math@rzeidler.eu}{math@rzeidler.eu}}
\URLaddress{\url{https://www.rzeidler.eu}}

\ArticleDates{Received September 01, 2020, in final form November 26, 2020; Published online December 02, 2020}

\Abstract{In this note, we review some recent developments related to metric aspects of scalar curvature from the point of view of index theory for Dirac operators. In particular, we revisit index-theoretic approaches to a conjecture of Gromov on the width of Riemannian bands $M \times [-1,1]$, and on a conjecture of Rosenberg and Stolz on the non-existence of complete positive scalar curvature metrics on $M \times \mathbb{R}$. We show that there is a more general geometric statement underlying both of them implying a quantitative negative upper bound on the infimum of the scalar curvature of a complete metric on $M \times \R$ if the scalar curvature is positive in some neighborhood. We study ($\hat{A}$-)iso-enlargeable spin manifolds and related notions of width for Riemannian manifolds from an index-theoretic point of view. Finally, we list some open problems arising in the interplay between index theory, largeness properties and width.}

\Keywords{scalar curvature; comparison geometry; index theory; Dirac operator; Callias-type operator; enlargeability; largeness properties}

\Classification{58J22; 19K56; 53C21; 53C23}

\renewcommand{\thefootnote}{\arabic{footnote}}
\setcounter{footnote}{0}

\section{Introduction}
Gromov proposed studying the geometry of scalar curvature via various metric inequalities which have similarities to classical Riemannian comparison geometry, see in particular~\cite{Gromov:MetricInequalitiesScalar,gromovFourLecturesScalar2019}.
One instance of such an inequality is the following estimate on the widths of Riemannian bands.
\begin{Conjecture}[{\cite[Section~11.12, Conjecture~C]{Gromov:MetricInequalitiesScalar}}]\label{conj:band-width}
 Let \(M\) be a closed manifold of dimension \(n-1\neq 4\) such that \(M\) does not admit a metric of positive scalar curvature.
 Then every Riemannian metric \(g\) on \(V = M \times [-1,1]\) of scalar curvature bounded below by \(n(n-1) = \scal_{\Sphere^n}\) satisfies
 \[
 \width(V, g) \coloneqq \dist_g(\partial_- V, \partial_+ V) \leq \frac{2\pi}{n},
 \]
 where \(\partial_\pm V = M \times \{\pm 1\}\).
\end{Conjecture}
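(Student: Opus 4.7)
The plan is to argue by an index-theoretic obstruction using a Callias-type Dirac operator on the band. I would first strengthen the non-PSC hypothesis on $M$ to the K-theoretic statement that $M$ is spin with nonvanishing Rosenberg index $\alpha(M) \in \mathrm{KO}_{n-1}(C^\ast_r \pi_1 M)$. This is the standard Dirac-theoretic obstruction to PSC and covers e.g.\ enlargeable spin manifolds; the purely non-spin case lies outside the reach of this approach.

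Assume $\scal_g \geq n(n-1)$ and, for contradiction, that $L := \width(V, g) > 2\pi/n$. Fix a smooth approximation $\rho \colon V \to [-L/2, L/2]$ of the signed distance to the central slice with $\lvert\nabla\rho\rvert \leq 1$ and $\rho \to \pm L/2$ near $\partial_\pm V$. Pick an auxiliary function $\varphi \colon (-L/2, L/2) \to \mathbb{R}$ with $\varphi(s) \to \pm\infty$ at the endpoints, to be optimized later. On the \Mishchenko-twisted spinor bundle $\mathcal{S}_V$, form the Callias-type operator $D_\varphi = D + \varphi(\rho)\,\sigma$, where $\sigma$ is an odd self-adjoint unitary anticommuting with $D$ coming from an extra Clifford generator. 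By a Callias-type index theorem, the index of $D_\varphi$ equals $\alpha(M) \neq 0$ up to the natural suspension.

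The Weitzenb\"ock--Lichnerowicz computation yields
\[
D_\varphi^2 = \nabla^\ast\nabla + \tfrac{1}{4}\scal_g + \varphi(\rho)^2 + \sigma\cdot c(\nabla\varphi(\rho)),
\]
whose last term is pointwise bounded in norm by $\lvert\varphi'(\rho)\rvert$. Hence, if the potential bulk $\tfrac{1}{4}\scal_g + \varphi^2 - \lvert\varphi'\rvert$ is strictly positive everywhere, then $D_\varphi$ is invertible, contradicting the nonvanishing of its index. The problem therefore reduces to finding $\varphi$ on $(-L/2,L/2)$ with the prescribed blow-up satisfying $\varphi'(s) \leq \varphi(s)^2 + \tfrac{n(n-1)}{4}$. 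Comparison with the Riccati ODE $\varphi' = \varphi^2 + c^2$, whose solutions $c\tan(cs)$ have blow-up interval of length $\pi/c$, produces the non-sharp bound $L \leq 2\pi/\sqrt{n(n-1)}$.

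The main obstacle is to close the gap to the sharp constant $2\pi/n$: the factor $\sqrt{(n-1)/n}$ must be absorbed by a finer argument. The refinement I would pursue is to work instead on a conformally warped band $(V, e^{2f(\rho)}g)$ with the conformally covariant Dirac operator $\tilde D\tilde\psi = e^{-(n+1)f/2}D\bigl(e^{(n-1)f/2}\psi\bigr)$. The free function $f$ enters the transformation law for $\scal_{\tilde g}$ via a term proportional to $(n-1)f''$, which, after jointly optimizing $f$ and $\varphi$, effectively converts the coefficient $\tfrac{n(n-1)}{4}$ in the Riccati comparison into $\tfrac{n^2}{4}$ and yields the sharp blow-up interval of length $2\pi/n$. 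Verifying that the Callias index is preserved under this rescaling, and that the boundary behavior of the rescaled potential remains admissible so that the index-theoretic input still applies, is where I expect the bulk of the technical work to lie.
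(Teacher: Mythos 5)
Your overall strategy matches the paper's: strengthen the non-PSC hypothesis to non-vanishing Rosenberg index $\alpha(M)\neq 0$, form a Callias-type operator $\Dirac + \varphi(\rho)\sigma$ (with $\sigma$ an extra Clifford generator), compute its square via Lichnerowicz, and compare against a Riccati ODE $\varphi' = \varphi^2 + c^2$ whose blow-up interval controls the width. Your identification of the non-sharp constant $2\pi/\sqrt{n(n-1)}$ from the naive Lichnerowicz coefficient $\scal/4$ is also correct. (The paper works on a completion $W$ of the band obtained by attaching cylinders and uses a \emph{bounded} interpolating potential rather than a proper one blowing up at the boundary; both are admissible variants of the Callias setup, so this is a stylistic difference, not a gap.)

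The genuine gap is in your proposed refinement to close the factor $\sqrt{n/(n-1)}$. You suggest a conformal rescaling $(V, e^{2f(\rho)}g)$ and the conformally covariant Dirac operator, but the paper instead invokes the \emph{Friedrich estimate}, i.e.\ the twistor/Penrose operator refinement of the Lichnerowicz inequality, which directly upgrades $\Dirac^2 \geq \frac{\scal}{4}$ to $\Dirac^2 \geq \frac{n\,\scal}{4(n-1)}$ as quadratic forms (this continues to hold for Dirac operators twisted by a flat bundle such as the \Mishchenko\ bundle, since the flat twist contributes no curvature term). With $\scal \geq n(n-1)$ the coefficient in the Riccati comparison then becomes exactly $n^2/4$, giving the blow-up length $2\pi/n$ with no further optimization. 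Your conformal route is both unnecessary and technically problematic: a conformal factor $e^{2f(\rho)}$ changes the metric, hence both the distance function $\rho$ and the width you are trying to bound; the transformation law for $\scal$ involves $\Delta_g f$, which is \emph{not} simply $f''(\rho)$ but also picks up the mean curvature of level sets of $\rho$; and the Lipschitz bound $|\nabla\rho|\leq 1$, essential for controlling the Clifford cross-term $\sigma\cdot c(\nabla\varphi(\rho))$, is destroyed by the rescaling. You would have to re-derive the entire Callias estimate in the rescaled metric and simultaneously translate the width bound back — none of which is addressed. Replacing the conformal refinement with the Friedrich inequality (see Cecchini's ``long neck'' paper, Section~3.2, cited in the text) turns your outline into essentially the paper's proof.
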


Rosenberg and Stolz previously proposed a seemingly related conjecture, see~\cite[Section~7]{Rosenberg-Stolz:Manifolds-of-psc}:

\begin{Conjecture}\label{conj:no-complete-psc}
 Let \(M\) be a closed manifold of dimension \(\neq 4\) such that \(M\) does not admit a~metric of positive scalar curvature.
 Then \(M \times \R\) does not admit a complete metric of positive scalar curvature.
\end{Conjecture}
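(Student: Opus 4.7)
The natural plan is to aim for an index-theoretic contradiction on $V = M \times \R$. I would assume that $g$ is a complete metric on $V$ with $\scal_g > 0$ and try to construct a Callias-type Dirac operator on $V$ whose index is non-zero (thanks to the hypothesis on $M$) but is forced to vanish by a Weitzenböck argument using positivity of $\scal_g$. The non-vanishing of the index should come from the Rosenberg index $\alpha(M) \in K_{n-1}(C^*\pi_1 M)$: the expected Stolz-type conjecture suggests that for $\dim M \neq 4$ the absence of psc on $M$ is witnessed by $\alpha(M) \neq 0$, and I would take this as the input (possibly after passing to a spin cover or replacing $\alpha$ by an enlargeability-type invariant if $M$ is not spin).

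The construction I have in mind twists the spin Dirac operator $D_g$ on $V$ by the Mi\v{s}\v{c}enko bundle of $\pi_1 V = \pi_1 M$ and perturbs it by a Callias potential $\Psi = \psi(t)\cdot c\bigl(\partial_t^\flat\bigr)$, where $\psi\colon \R \to \R$ is smooth, odd, and grows at $\pm\infty$. A Callias-type index theorem (in the spirit of Anghel, Bunke, or Cecchini) should identify $\operatorname{ind}(D_g + \Psi)$ with $\pm \alpha(M)$ by localisation on the separating slice $M \times \{0\}$; once the analytic framework is fixed I expect this step to be essentially formal. One then computes the Weitzenböck identity
\[
 (D_g + \Psi)^2 = D_g^* D_g + \tfrac{1}{4}\scal_g + \Psi^2 + c(\operatorname{grad}\psi),
\]
from which invertibility of $D_g + \Psi$ would be extracted, contradicting the non-vanishing of the index.

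The hard part is precisely this last step. Completeness together with $\scal_g > 0$ do \emph{not} give a uniform positive lower bound on $\scal_g$: if $\scal_g$ decays to zero along $V$, then the summand $\Psi^2$ must alone dominate $|c(\operatorname{grad}\psi)|$, forcing $|\psi'|$ to be very small relative to $\psi^2$ and hence a delicate trade-off between the size and the derivative of $\psi$. I would try to close this either by working with a one-parameter family $\psi_\lambda$ and passing to a limit in a suitable Roe-algebraic index pairing, or by applying a band-width inequality in the spirit of Conjecture~\ref{conj:band-width} to the exhausting sequence of bands $M \times [-R,R]$ and letting $R \to \infty$, turning the failure of a uniform width bound into a contradiction with completeness. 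The genuine obstacle in either approach is the absence of a uniform positive lower bound on $\scal_g$, which is presumably why the unconditional theorem announced in the abstract takes the weaker quantitative form of a negative upper bound on $\inf \scal_g$ under a local positivity hypothesis, rather than Conjecture~\ref{conj:no-complete-psc} as stated.
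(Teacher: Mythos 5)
You have the right architecture --- twist by the Mi\v{s}\v{c}enko bundle, add a Callias potential, identify $\ind(B)$ with $\alpha(M)$ --- and you have correctly located the genuine difficulty: completeness and $\scal_g > 0$ give no uniform positive lower bound on the scalar curvature. But neither of your two suggested fixes (a one-parameter $\psi_\lambda$-family in a Roe-algebraic pairing, or band-exhaustion by $M \times [-R,R]$) is what the paper does, and I do not see how either one closes without importing the very idea you are missing.

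The missing idea (Theorem~\ref{thm:quantitative-codim-1}, following Cecchini) is to \emph{make the derivative of the Callias potential compactly supported}, so that $f'$ only has to compete with $\scal$ where the latter is known to be large and positive, and a \emph{constant} $f$ does the work outside. Concretely: choose $0 < r < n/2$ and a cutoff $\xi$ of $y^2 + r^2$, and let $f$ solve $f' = \xi(x,f(x))$, $f(0)=0$. Then $f(x) = r\tan(rx)$ on a core interval $[-\tilde d/2,\tilde d/2]$ while $f$ is constant and $f' = 0$ for $|x| \geq d/2$. Argue by contraposition: suppose $\scal \geq -n(n-1)\sigma$ everywhere with $\sigma < \tan(nd/4)^2$. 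On $|x|\leq d/2$ the local hypothesis $\scal \geq n(n-1)$ together with $f^2 - f' \geq -r^2$ gives $B^2 \geq \tfrac{n^2}{4}-r^2 > 0$; on $|x|\geq d/2$ one has $f' = 0$, $|f| \geq r\tan(r\tilde d/2)$, and choosing $r$ so that $\tfrac{n^2\sigma}{4} < r^2\tan(r\tilde d/2)^2$ again makes $B^2 > 0$. Hence $\ind(B) = 0$, contradicting $\alpha(M)\neq 0$. This yields $\inf\scal_g \leq -n(n-1)\tan(nd/4)^2$, and the conditional conjecture follows because $\scal_g > 0$ and compactness of $M$ give, after rescaling, the required local bound on a band of positive width, forcing $\inf\scal_g < 0$, a contradiction. (You are also correct that the paper does not prove Conjecture~\ref{conj:no-complete-psc} unconditionally: it proves the case $\alpha(M)\neq 0$, Corollary~\ref{cor:noPSc-and-bandWidth}, which is the intended scope.)
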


While these two conjectures appear similar, there is no direct formal implication between them.
Conjecture~\ref{conj:band-width} only implies that \(M \times \R\) does not admit a complete metric of \emph{uniformly} positive scalar curvature, a weaker conclusion than what is desired by Conjecture~\ref{conj:no-complete-psc}.
\begin{Remark}
We have to exclude dimension \(4\) because even this weaker conclusion is known to fail in this case: Using Seiberg--Witten invariants, it is possible to show that there exists a simply connected spin \(4\)-manifold such that \(M\) does not admit positive scalar curvature but \(M \times \Sphere^1\) does.
To this end, one can take a simply connected \(4\)-manifold of vanishing \(\Ahat\)-genus which does not admit a metric of positive scalar curvature, \cite[Counterexample~4.16]{RosenbergStolz:PSCSurgeryConnections}, \cite{Taubes:SeibergWitten}.
But if \(M \times \Sphere^1\) admits positive scalar curvature, then \(M \times \R\) admits a complete metric of uniformly positive scalar curvature.
\end{Remark}

The ideas behind these conjectures have their roots in work of Gromov and Lawson~\mbox{\cite{gromov-lawson:spin-and-scalar-in-presence, GromovLawson:PSCDiracComplete}}, where they studied various notions of \emph{largeness} for Riemannian manifolds, most notably \emph{enlargeability}, which serve as obstructions to the existence of positive scalar curvature metrics.
While these conditions are of purely geometric topological nature, the original proof that these are obstructions to positive scalar curvature rely on the spin condition and (relative) index theory of spinor Dirac operators.\footnote{An approach relying on the minimal hypersurface technique has been given by Cecchini and Schick~\cite{CecchiniSchick:Enlargeable}.}
They already showed that enlargeable spin manifolds satisfy Conjecture~\ref{conj:no-complete-psc}, see~\cite[Section~6]{GromovLawson:PSCDiracComplete}.
Gromov recently generalized enlargeability based on results related to Conjecture~\ref{conj:band-width} to a new notion of \emph{iso-enlargeability}~\cite[Section~4]{Gromov:MetricInequalitiesScalar} which deviates from classical enlargeability by using not necessarily complete manifolds (see Section~\ref{sec:largeness}).
Initially, it appeared that Conjecture~\ref{conj:band-width} and iso-enlargeability \emph{cannot} be understood using index theory of the Dirac operator because the latter does not naturally lend itself to the study non-complete manifolds.
Instead, in \cite{Gromov:MetricInequalitiesScalar} these ideas were approached via the minimal hypersurface technique of Schoen and Yau~\cite{SchoenYau:HypersurfaceMethod}.
On the other hand, as mentioned above, Conjecture~\ref{conj:no-complete-psc} was already known to be accessible to index-theoretic methods.
Cecchini~\cite{Cecchini:CalliasTypePSC} strengthened this link by showing that Conjecture~\ref{conj:no-complete-psc} holds whenever \(M\) is spin and has non-vanishing \emph{Rosenberg index} (see Section~\ref{sec:Rosenberg-index} for a survey of the Rosenberg index).
Furthermore, it turned out that Conjecture~\ref{conj:band-width} can also be addressed using index theory of Dirac operators.
Indeed, it was shown by the author~\cite{Zeidler:band-width-estimates} that a~version of Conjecture~\ref{conj:band-width} with a weaker upper bound holds if \(M\) has non-vanishing Rosenberg index.
Recently, Cecchini~\cite{Cecchini:LongNeck}, along with addressing several other questions in Gromov's metric inequalities programme, improved this estimate by showing that Conjecture~\ref{conj:band-width} holds as stated if \(M\) has non-vanishing Rosenberg index.

The purpose of this note is threefold.
\emph{Firstly}, we revisit the direct index-theoretic approaches to Conjectures~\ref{conj:band-width} and~\ref{conj:no-complete-psc} via the Rosenberg index.
This was prompted by observing that both cases are based on the same index-theoretic technique, namely an index theorem for certain \emph{Callias-type} operators, suggesting that there should be a more general geometric statement underlying both.
This is achieved in Theorem~\ref{thm:quantitative-codim1-obstr} below.
We show that in the presence of a~hypersurface \(M \subset X\) with an index-theoretic obstruction, positive scalar curvature \(\geq \sigma > 0\) in a~neighborhood \(\{p \in X \mid d(p, M) \leq d/2 \}\) of width \(d\) forces the scalar curvature somewhere else to be negative in a~quantifiable way depending on \(d\) and \(\sigma\).
We formulate this for the normalized case \(\sigma = n(n-1) = \scal_{\Sphere^n}\).

\begin{Theorem} \label{thm:quantitative-codim1-obstr}
 Let \(X\) be a spin \(n\)-manifold without boundary and \(M \subseteq X\) a closed codimension one submanifold with trivial normal bundle such that the induced map \(\pi_1 M \to \pi_1 X\) is injective.
 Suppose that the Rosenberg index \(\alpha(M) \in \KO_{n-1}(\Cstar \pi_1 M)\) does not vanish.

 Let \(g\) be a complete Riemannian metric on \(X\) which has scalar curvature bounded below by \(n(n-1)\) in a neighborhood of \(M\) of width \(d < 2\pi/n\).
 Then
 \begin{equation} \label{eq:negativity}
 \inf_{p \in X} \scal_g(p) \leq - n(n-1) \tan\left(\frac{nd}{4}\right)^2.
 \end{equation}
\end{Theorem}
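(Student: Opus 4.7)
The plan is to argue by contradiction: if $\scal_g > -n(n-1)\tan^2(nd/4)$ pointwise on $X$, I will construct a Callias-type Dirac operator whose index must simultaneously equal $\alpha(M)$ (and hence be non-zero) and also vanish by positivity of the squared operator, a contradiction.

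\emph{Operator setup.} Using the trivial normal bundle and the injectivity of $\pi_1 M \hookrightarrow \pi_1 X$, I work on an appropriate cover $\widetilde{X}\to X$ in which $M$ lifts to a two-sided hypersurface $\widetilde{M}$ with trivial normal bundle, so that the Rosenberg index of $M$ is directly visible as a Callias boundary class. Let $\rho\colon \widetilde{X}\to\mathbb{R}$ be a smooth function that agrees with the signed distance from $\widetilde{M}$ on the tubular neighborhood $U = \{d_g(\cdot,\widetilde{M}) \leq d/2\}$ (where $\scal_g \geq n(n-1)$) and is constant on each end of $\widetilde{X}\setminus U$. Let $\mathcal{L}$ be the Mishchenko bundle of flat $\Cstar\pi_1 M$-modules on $\widetilde{X}$, $D$ the associated twisted Dirac operator on $S(\widetilde{X})\otimes\mathcal{L}$, and $\gamma$ its grading involution. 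Choose an odd smooth $\phi\colon \mathbb{R}\to\mathbb{R}$ with $\phi(s)=\tfrac{n}{2}\tan(ns/2)$ on $|s|\leq d/2$ (well defined since $d<2\pi/n$) and constant equal to $\pm\tfrac{n}{2}\tan(nd/4)$ for $|s|$ sufficiently large, arranged so that $\phi'(s)\leq \phi(s)^2+n^2/4$ throughout. Form the Callias-type operator $\mathscr{D} \coloneqq D + \phi(\rho)\,\gamma$.

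\emph{Non-vanishing of the index.} The Callias-type relative index theorem of Cecchini \cite{Cecchini:LongNeck} (building on \cite{Zeidler:band-width-estimates}) identifies the Mishchenko index of $\mathscr{D}$ in $\KO_{n-1}(\Cstar\pi_1 M)$ with the Rosenberg index $\alpha(M)$, which is non-zero by hypothesis.

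\emph{Bochner estimate and contradiction.} A Penrose/Friedrich-style sharpening of the Weitzenb\"ock formula, applied to the Callias operator $\mathscr{D}$, yields the pointwise inequality
\[
\mathscr{D}^2 \;\geq\; \tfrac{n}{4(n-1)}\,\scal_g \,+\, \phi(\rho)^2 \,-\, \phi'(\rho).
\]
On $U$, the hypothesis $\scal_g\geq n(n-1)$ together with the ODE $\phi'=\phi^2+n^2/4$ makes the right-hand side non-negative. Outside $U$, $\phi'\equiv 0$ and $\phi^2\equiv \tfrac{n^2}{4}\tan^2(nd/4)$, so the bound reduces to $\tfrac{n}{4(n-1)}\bigl(\scal_g + n(n-1)\tan^2(nd/4)\bigr)$, which is strictly positive by the contradiction hypothesis. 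Hence $\mathscr{D}^2$ is non-negative everywhere and uniformly positive outside the compact zone $U$; this is enough to force $\mathscr{D}$ to be invertible in the appropriate Hilbert-module sense and $\mathrm{ind}(\mathscr{D})=0$, a contradiction.

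\emph{Main obstacle.} The two non-elementary inputs are (i) the Callias-type relative index theorem in the Mishchenko-bundle setting with an only asymptotically constant potential on a non-compact manifold of arbitrary topology, and (ii) the Friedrich/Penrose sharpening of the Bochner formula: without replacing the naive Lichnerowicz coefficient $1/4$ by $\tfrac{n}{4(n-1)}$, one would only obtain the strictly weaker bound $-n(n-1)\tan^2(\sqrt{n(n-1)}\,d/4)$ and would fail to match the critical $2\pi/n$ threshold of Conjecture~\ref{conj:band-width}.
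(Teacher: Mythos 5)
The overall architecture of your proof coincides with the paper's: pass to the covering of $X$ with fundamental group $\pi_1 M$, twist the Dirac operator by the Mi\v{s}\v{c}enko bundle, form a Callias-type operator with potential $\phi(\rho)$, identify its index with $\alpha(M)$, and then derive a contradiction from the Weitzenb\"ock estimate improved by the Friedrich/Penrose trick. You have correctly identified the essential ingredients, including the crucial role of the coefficient $\tfrac{n}{4(n-1)}$ rather than $\tfrac{1}{4}$.

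However, there is a genuine gap in the final step. You take the \emph{extremal} potential $\phi(s) = \tfrac{n}{2}\tan(ns/2)$ on $|s| \leq d/2$, which solves $\phi' = \phi^2 + n^2/4$ exactly. Combined with $\scal_g \geq n(n-1)$ on $U$, this yields only
\[
\frac{n}{4(n-1)}\scal_g + \phi(\rho)^2 - \phi'(\rho) \;\geq\; \frac{n^2}{4} + \phi^2 - \Bigl(\phi^2 + \frac{n^2}{4}\Bigr) \;=\; 0
\]
on $U$ — non-negativity, not strict positivity. You acknowledge this ("non-negative"), yet then assert that non-negativity on $U$ plus uniform positivity off $U$ "is enough to force $\mathscr{D}$ to be invertible." In the Hilbert-$\Cstar\pi_1 M$-module Callias framework, the standard vanishing criterion requires $\mathscr{D}^2 \geq c > 0$ uniformly; merely $\mathscr{D}^2 \geq 0$ plus coercivity off a compact set gives Fredholmness but not invertibility without an additional argument (e.g., a unique continuation statement for solutions of $\mathscr{D}\psi = 0$, which is not supplied and whose Hilbert-module version would need to be justified). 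This is not a cosmetic point: the contradiction hypothesis is strict, $\inf \scal_g > -n(n-1)\tan^2(nd/4)$, and your choice of $\phi$ does not exploit that slack on $U$ at all, so there is room in the argument that you are not using.

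The paper handles this precisely. By contraposition it fixes $\sigma < \tan^2(nd/4)$ with $\scal \geq -n(n-1)\sigma$, then picks $\tilde d < d$ and $0 < r < n/2$ with $\tfrac{n^2\sigma}{4} < r^2\tan^2(r\tilde d/2)$, and solves a \emph{sub-extremal} ODE $f' = \xi(x,f)$ with $0 \leq \xi(x,y) \leq y^2 + r^2$. On $|x| \leq d/2$ this gives the bound $\geq \tfrac{n^2}{4} - r^2 > 0$, and off that set $\geq -\tfrac{n^2\sigma}{4} + r^2\tan^2(r\tilde d/2) > 0$, i.e., a uniform positive lower bound on all of $W$, so $B^2 \geq c > 0$ and $\ind(B) = 0$ directly. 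Your argument would be correct if you replaced $\tfrac{n}{2}\tan(ns/2)$ by $\tfrac{n'}{2}\tan(n's/2)$ for $n'$ slightly less than $n$ (chosen using the strictness of the contradiction hypothesis) and verified that the outside bound survives the perturbation; without such a modification, the step from $\mathscr{D}^2 \geq 0$ to $\ind(\mathscr{D}) = 0$ is unsupported.

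Two smaller remarks: (a) the identity $\ind(\mathscr{D}) = \alpha(M)$ that you invoke is correct, but it is a consequence of the Callias index theorem $\ind(B) = \ind(\Dirac_{M, E|_M})$ combined with the observation that $\mathcal{L}_W|_{\widetilde M} = \mathcal{L}_M$ because $\pi_1 W = \pi_1 M$ — worth making explicit; (b) your closing comment about the constant $\tan^2(\sqrt{n(n-1)}\,d/4)$ from the naive Lichnerowicz coefficient is correct in spirit and confirms that you understand why the Friedrich inequality is needed for the sharp threshold.
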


For manifolds of non-vanishing Rosenberg index, this immediately implies Conjecture~\ref{conj:no-complete-psc} and furthermore, since the term on the right-hand side of \eqref{eq:negativity} tends to \(-\infty\) as \(d \to 2 \pi /n\), it also implies Conjecture~\ref{conj:band-width}.

\begin{Corollary}\label{cor:noPSc-and-bandWidth}
 Let \(M\) be a closed spin manifold of non-vanishing Rosenberg index \(\alpha(M) \neq 0 \in \KO_{n-1}(\Cstar \pi_1 M)\).
 Then the following holds:
 \begin{itemize}\itemsep=0pt
 \item \(M \times \R\) does not admit a complete metric of positive scalar curvature,
 \item Every Riemannian metric \(g\) on \(V = M \times [-1,1]\) of scalar curvature \(\geq n(n-1)\) satisfies
 \[
 \width(V, g) < \frac{2\pi}{n}.
 \]
 \end{itemize}
\end{Corollary}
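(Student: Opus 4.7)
The plan is to deduce both assertions directly from Theorem~\ref{thm:quantitative-codim1-obstr}, applied to $X = M \times \R$ with its product spin structure and to a central slice $N$ diffeomorphic to $M$. Any such $N$ has trivial normal bundle, $\pi_1 N \to \pi_1 X$ is the identity of $\pi_1 M$, and $\alpha(N) = \alpha(M) \neq 0$ by hypothesis, so the topological conditions of Theorem~\ref{thm:quantitative-codim1-obstr} are automatic; what must be arranged in each case is that $\scal \geq n(n-1)$ holds on a neighborhood of $N$ of width approaching $2\pi/n$.

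For the first bullet I would argue by contradiction. A hypothetical complete metric $g$ on $M \times \R$ with $\scal_g > 0$ has, by compactness of the slice $M \times \{0\}$, some $\sigma > 0$ and $d_0 > 0$ with $\scal_g \geq \sigma$ on the $g$-neighborhood of the slice of width $d_0$. Choosing $\lambda > 0$ small enough that both $\lambda^{-2}\sigma \geq n(n-1)$ and $\lambda d_0 < 2\pi/n$, Theorem~\ref{thm:quantitative-codim1-obstr} applied to the homothetically rescaled metric $\lambda^2 g$ forces $\inf_X \scal_{\lambda^2 g} \leq -n(n-1)\tan(n\lambda d_0/4)^2 < 0$, contradicting $\scal_{\lambda^2 g} = \lambda^{-2}\scal_g > 0$.

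For the second bullet, set $D := \width(V, g)$ and assume toward contradiction that $D \geq 2\pi/n$. The continuous function $t \mapsto \dist_g(M \times \{t\}, \partial_- V) - \dist_g(M \times \{t\}, \partial_+ V)$ takes values $-D$ at $t = -1$ and $+D$ at $t = +1$, so by the intermediate value theorem it vanishes at some $t_* \in (-1, 1)$; the triangle inequality forces the central slice $N := M \times \{t_*\}$ to have $g$-distance $\geq D/2 \geq \pi/n$ from each boundary component. I would then extend $(V, g)$ to a complete smooth metric $\tilde g$ on $X = M \times \R$ by first modifying $g$ in a thin inward collar of $\partial V$ so that it becomes a product $g|_{\partial V} + dt^2$ near the boundary, and then gluing on standard cylindrical ends; the modification is performed so that $\scal_{\tilde g}$ is globally bounded below and the $\tilde g$-distance from $N$ to the region where $\scal_{\tilde g}$ drops below $n(n-1)$ is at least $D/2 - \eta$ for a preassigned $\eta > 0$. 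For every $d < 2\pi/n$, Theorem~\ref{thm:quantitative-codim1-obstr} then yields $\inf_X \scal_{\tilde g} \leq -n(n-1)\tan(nd/4)^2$; letting $d \nearrow 2\pi/n$ makes the right-hand side diverge to $-\infty$, contradicting the boundedness of $\scal_{\tilde g}$.

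The hard part, and the main place where care is required, is the collaring construction in the second bullet: the extension $\tilde g$ must simultaneously be smooth across $\partial V$, preserve $\scal \geq n(n-1)$ on a neighborhood of $N$ of width arbitrarily close to $D$, and retain a uniform lower bound on scalar curvature as the collar thickness shrinks, so that the limit $d \nearrow 2\pi/n$ truly produces a contradiction rather than merely matching an accompanying blow-up of the extension's negative curvature. This is a standard application of the collar neighborhood theorem for Riemannian metrics; once it is in place, both bullets follow mechanically from Theorem~\ref{thm:quantitative-codim1-obstr} together with the divergence of $\tan$ at $\pi/2$.
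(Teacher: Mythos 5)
Your argument for the first bullet is correct and is essentially the paper's: compactness gives a tube of width $d_0$ with $\scal_g \ge \sigma > 0$, a homothety $\lambda^2 g$ rescales this to $\scal \ge n(n-1)$ on a tube of width $\lambda d_0 < 2\pi/n$, and Theorem~\ref{thm:quantitative-codim1-obstr} then forces $\inf \scal_{\lambda^2 g} < 0$, contradicting positivity.

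For the second bullet there is a genuine gap at the triangle-inequality step. You choose $t_*$ by the intermediate value theorem so that $r \coloneqq \dist_g(N,\partial_- V) = \dist_g(N,\partial_+ V)$, and then claim $r \ge D/2$. The correct estimate runs the other way. Since $N = M\times\{t_*\}$ separates the two boundary components, every path $\gamma$ from $\partial_- V$ to $\partial_+ V$ meets $N$ at some point $p_0$, so its length is at least $\dist_g(\partial_- V, N) + \dist_g(N, \partial_+ V) = 2r$. Taking the infimum over $\gamma$ gives $D \ge 2r$, i.e.\ $r \le D/2$, and equality fails in general because $\dist_g(N, \partial_- V)$ and $\dist_g(N, \partial_+ V)$ may each be realized at different points of the compact slice $N$. (The pointwise bound $\dist_g(p,\partial_- V)+\dist_g(p,\partial_+ V) \ge D$ for fixed $p\in N$ does not descend to the infima over $N$.) Consequently you cannot guarantee that the symmetric tube $\{p : d_{\tilde g}(p,N) \le d/2\}$ stays inside $V$ as $d \nearrow 2\pi/n$, so the hypothesis of Theorem~\ref{thm:quantitative-codim1-obstr} is not verified in the limit, and the contradiction does not materialize.

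The paper avoids this by not going through Theorem~\ref{thm:quantitative-codim1-obstr} at all for the second bullet. Proposition~\ref{cor:KOBandEstimate} invokes the more flexible Theorem~\ref{thm:quantitative-codim-1} directly: after attaching cylindrical ends to $V$ one takes $x$ to be the signed distance to $\partial_- V$ (not to a central slice), so that $\scal \ge n(n-1)$ holds on $x^{-1}(I)$ for $I \subseteq [0, D]$ an interval of length $\ge 2\pi/n$. Theorem~\ref{thm:quantitative-codim-1} only asks for a scalar curvature bound on $x^{-1}(I)$ for an arbitrary interval $I$ of length $d$; the hypersurface carrying the index obstruction need not sit at the centre of $I$. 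This asymmetric formulation is what allows the full width $D \ge 2\pi/n$ to be used. It also sidesteps the collar-shrinking limit $\eta \to 0$ that you single out as the delicate point: the modification is made once, outside $V$ (or at a fixed scale), and the required interval $I$ sits entirely inside $V$ where $g$ is untouched, so $\inf_W \scal$ stays bounded as $d \nearrow 2\pi/n$ and the contradiction is genuine. If you wish to keep the spirit of your argument, the fix is to replace the central slice $N$ and Theorem~\ref{thm:quantitative-codim1-obstr} by $\partial_- V$ together with the interval form of Theorem~\ref{thm:quantitative-codim-1}, exactly as in Proposition~\ref{cor:KOBandEstimate}.
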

Note that the latter is even slightly stronger than the desired conclusion of Conjecture~\ref{conj:band-width} because we obtain a strict inequality.

\emph{Secondly}, we exhibit an index-theoretic point of view towards Gromov's \emph{iso-enlargeability} for spin manifolds.
This was essentially already implicit in \cite{Zeidler:band-width-estimates}, where the related notion of \emph{infinite \(\mathcal{KO}\)-width} was introduced (see Section~\ref{sec:KO-width}).
Here we make this explicit and study a~slight generalization of iso-enlargeability, namely \emph{\(\Ahat\)-iso-enlargeability} (following Gromov and Lawson's \(\Ahat\)-enlargeability, see Section~\ref{sec:largeness}), and relate it to infinite \(\mathcal{KO}\)-width.
This leads to the following theorem (see Section~\ref{A-hat-largeness}).
\begin{Theorem}\label{thm:A-hat-iso-enlargeable}
 Let \(M\) be a closed spin manifold which is \(\Ahat\)-iso-enlargeable.
 Then \(M\) does not admit a metric of positive scalar curvature.
\end{Theorem}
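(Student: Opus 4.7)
My strategy is to relate $\Ahat$-iso-enlargeability to the notion of infinite $\mathcal{KO}$-width (Section~\ref{sec:KO-width}) and then to obstruct positive scalar curvature via a bundle-twisted version of the Callias-type argument that lies behind Theorem~\ref{thm:quantitative-codim1-obstr}. The hypothesis furnishes, for each $\epsilon > 0$, a spin cover $\hat{p} \colon \hat{M} \to M$ equipped with a (possibly non-complete) Riemannian metric $\hat{g}$ and an $\epsilon$-Lipschitz map $f_\epsilon \colon (\hat{M}, \hat{g}) \to \Sphere^n$ that is locally constant outside a compact set and has nonzero $\Ahat$-degree. My first step is to convert these data into a $\mathcal{KO}$-width witness for $M$ whose size tends to $\infty$ as $\epsilon \to 0$: pull back the canonical generator of $\KO^n(\Sphere^n)$ along $f_\epsilon$ to obtain a Clifford bundle $\mathcal{E}_\epsilon$ on $\hat{M}$ which is trivial outside a compact set and whose connection has curvature of order~$\epsilon$. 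The twisted spinor Dirac operator with coefficients in $\mathcal{E}_\epsilon$ then carries an index class refining the $\Ahat$-degree of $f_\epsilon$, and, because $f_\epsilon$ is contracting, this class persists on bands inside~$\hat{M}$ of arbitrarily large width.

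Next, I would assume for contradiction that $M$ admits a metric $g_0$ of positive scalar curvature, rescaled so that $\scal_{g_0} \geq n(n-1)$. Lifting $g_0$ to $\hat{M}$ and working in a band $V \subseteq \hat{M}$ of large width (measured in~$\hat{g}$) around the locus where $f_\epsilon$ is non-constant, a twisted Lichnerowicz calculation shows that the psc hypothesis on $\hat{p}^\ast g_0$ together with the $O(\epsilon)$ curvature contribution from $\mathcal{E}_\epsilon$ force the associated Callias-type Dirac operator to be pointwise positive. The nonzero $\Ahat$-degree, meanwhile, supplies a non-vanishing index class. Applying the $\mathcal{E}_\epsilon$-twisted analogue of Theorem~\ref{thm:quantitative-codim1-obstr}, whose proof goes through essentially verbatim with coefficients, then bounds the width of the psc neighborhood by a quantity approaching $2\pi/n$ as $\epsilon \to 0$. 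Since $\Ahat$-iso-enlargeability permits taking that width arbitrarily large, this is the desired contradiction.

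The principal obstacle, I expect, is the first step: packaging the $\Ahat$-degree of an $\epsilon$-Lipschitz map from a possibly non-complete spin manifold into a non-trivial, \emph{Fredholm} index class of a Dirac-type operator. Classical enlargeability handles this via a relative index theorem on complete manifolds, but completeness fails here, so one has to follow the Callias-type scheme developed by Cecchini~\cite{Cecchini:CalliasTypePSC,Cecchini:LongNeck} and already implicit in~\cite{Zeidler:band-width-estimates}: deform the twisted Dirac by an admissible potential vanishing on a compact hypersurface meeting the support of $f_\epsilon$, so that the index becomes localised on a codimension-one submanifold and is computed by a topological invariant manifestly detected by the $\Ahat$-degree of $f_\epsilon$. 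Once this is in place, the quantitative width bound of Theorem~\ref{thm:quantitative-codim1-obstr} applies with only cosmetic modifications, and letting $\epsilon \to 0$ finishes the argument.
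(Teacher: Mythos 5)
The high-level outline is the same as the paper's — reduce $\Ahat$-iso-enlargeability to infinite $\mathcal{KO}$-width and then invoke the Callias-type width estimate — but your mechanism for producing the $\mathcal{KO}$-width witness has a gap that the paper specifically designs its argument to avoid.

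You propose to pull back the generator of $\widetilde{\KO}(\Sphere^k)$ along the $\varepsilon$-Lipschitz map $f_\varepsilon$ to obtain an almost-flat bundle $\mathcal{E}_\varepsilon$ and then run the Callias-type machinery on a band. The trouble is that the index of a Callias-type operator of the form $B = \Dirac_{E}\tensgr 1 + f(x)\tensgr\epsilon$ localizes as $\ind(\Dirac_{M, E|_M})$ on a closed \emph{codimension-one} submanifold $M$ (a level set of $x$). For this to be nonzero you need the twisting bundle restricted to $M$ to carry nontrivial K-theory, but the generator of $\widetilde{\KO}(\Sphere^k)$ restricts to zero on any proper closed submanifold of $\Sphere^k$; consequently $\mathcal{E}_\varepsilon|_M$ is stably trivial and the Callias index vanishes for trivial reasons. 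Shrinking $\varepsilon$ to make the bundle flatter does not help — the index was already zero. This is precisely why the classical Gromov--Lawson relative index argument with sphere bundles requires \emph{completeness} of the ambient manifold: there the relative index over the whole manifold picks up the $\Ahat$-degree, but here $\bar{X}$ is allowed to be incomplete and one has to work on a compact band, where the sphere bundle gives nothing at the boundary.

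The paper circumvents this by a different device. One embeds a torical band $V_k \cong \Torus^{k-1}\times[-1,1]$ inside $\Sphere^k$ of some definite width $d_k>0$, avoiding the base point. The preimage $\hat{V} = f_\varepsilon^{-1}(V_k)$ is then a compact $\Ahat$-overtorical band of width $\geq d_k/\varepsilon$ (this is where the Lipschitz constant enters — to blow up the width, not to control curvature). One twists by the pullback of the flat \Mishchenko\ bundle of $\Torus^{k-1}\times[-1,1]$, so the Lichnerowicz formula has no curvature correction at all; and the strong Novikov conjecture for $\Z^{k-1}$ (Proposition~\ref{prop:overtorical-implies-KO}) guarantees that the nonzero $\Ahat$-degree forces $\ind(\Dirac_{\partial_-\hat{V}, E|_{\partial_-\hat{V}}})\neq 0$. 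Now $\hat{V}$ really is a $\mathcal{KO}$-band, and Proposition~\ref{cor:KOBandEstimate} (attaching cylinders to $\hat{V}$ to restore completeness and applying Theorem~\ref{thm:quantitative-codim-1}) bounds its width by $2\pi/n$ — contradicting $\width(\hat{V})\geq d_k/\varepsilon \to\infty$. In short: you should replace the sphere bundle by a flat torus bundle obtained through the embedded torical band; the $\varepsilon$ is used geometrically to enlarge the band, and the index nontriviality comes from the torus' $\KO$-homology via Novikov, not from cancelling small curvature.
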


\emph{Finally}, we survey several open questions suggested by these recent developments connecting index theory to metric and largeness aspects of scalar curvature (see Sections~\ref{sec:largeness} and~\ref{sec:Problems}).

\section{The Rosenberg index} \label{sec:Rosenberg-index}
In this section, we provide a brief survey of the aspects of the Rosenberg index relevant to our statements and open problems.
More material on these matters can be found in the original articles of Rosenberg~\cite{Rosenberg:PSCNovikovI,Rosenberg:PSCNovikovII,Rosenberg:PSCNovikovIII} and in \cite{RosenbergStolz:PSCSurgeryConnections,Schick:ICM,Stolz:ManifoldsOfPSC,Stolz98Concordance}.

The starting point is that the spinor Dirac operator on a spin manifold is closely related to the scalar curvature via the Schr\"odinger--Lichnerowicz formula \(\Dirac^2 = \nabla^\ast \nabla + \frac{\scal}{4}\).
Together with the Atiyah--Singer index theorem, this was first used by Lichnerowicz~\cite{Lichnerwociz:Spineurs} to show that the \(\Ahat\)-genus is an obstruction to the existence of a positive scalar curvature metric on a closed spin manifold \(M\).
Later, this connection between the Dirac operator and scalar curvature was exploited further using variations of the index invariant.
For closed spin manifolds, the most sophisticated such invariant is the \emph{Rosenberg index} \(\alpha(M) \in \KO_n(\Cstar \pi_1 M)\) which takes values in the (real) K-theory of the group \textCstar-algebra \(\Cstar \pi_1 M\).
Notable examples of manifolds with non-vanishing Rosenberg index (but which in general do not have non-vanishing \(\Ahat\)-genus) include (area-)enlargeable spin manifolds~\cite{Hanke-Kotschick-Roe-Schick:Coarse-Top-Enlargeability, HankeSchick:Enlargeable,HankeSchick:EnlargeableInfinite}.

We will now describe two (equivalent) points of view towards the Rosenberg index.
The first is as the index of the spinor Dirac operator of \(M\) twisted by the ``universal flat bundle'' on~\(M\).
The latter is the \emph{Mi\v{s}\v{c}enko bundle} \(\mathcal{L}_M \to M\) which is the flat bundle of finitely generated projective Hilbert-\(\Cstar \Gamma\)-modules associated to the representation of \(\Gamma \coloneqq \pi_1 M\) on \(\Cstar \Gamma\) by left-multiplication.
Index theory extends to the situation of elliptic operators with coefficients in \textCstar-algebras yielding index invariants in corresponding operator K-theory groups.
This goes back to Mi\v{s}\v{c}enko and Fomenko~\cite{Mishchenko-Fomenko:Index}; for more recent expositions of the technical background relevant to our situation we refer to~\cite{Ebert:EllipticRegularityDirac,HankePapeSchick:CodimensionTwoIndex}.
The Rosenberg index \(\alpha(M) \in \KO_n(\Cstar \pi_1 M)\) is then simply the (K-theoretic) index of the twisted Dirac operator on \(\SpinBdl_M \otimes \mathcal{L}_M\) using the flat connection induced on the \Mishchenko\ bundle \(\mathcal{L}_M\), and \(\SpinBdl_M\) is the spinor bundle.
The Schr\"odinger--Lichnerowicz formula extends to this situation so that the non-vanishing of \(\alpha(M) \in \KO_n(\Cstar \Gamma)\) is still an obstruction to the existence of a positive scalar curvature metric on \(M\).

The second point of view is through the lens of the strong Novikov and Baum--Connes conjectures.
The (real) Baum--Connes conjecture features the assembly map
\[
 \mu \colon \ \KO_\ast^\Gamma(\Eub \Gamma) \to \KO_\ast(\Cstar \Gamma),
\]
where the left-hand side is the equivariant KO-homology of the classifying space for proper actions.
Note that there is a canonical map \(\iota_\ast \colon \KO_\ast(\Bfree \Gamma) = \KO_\ast^\Gamma(\Efree \Gamma) \to \KO_\ast^\Gamma(\Eub \Gamma)\), where \(\Bfree \Gamma\) is the usual classifying space of \(\Gamma\) and \(\Efree \Gamma\) the classifying space for free actions.
The map \(\iota_\ast\) is always rationally injective.
Moreover, it is an isomorphism iff \(\Gamma\) is torsion-free.
The \emph{Novikov assembly map} is the composition
\[
 \nu = \mu \circ \iota_\ast \colon \ \KO_\ast^\Gamma(\Bfree \Gamma) \to \KO_\ast(\Cstar \Gamma).
\]
The \emph{strong Novikov conjecture} asserts that \(\nu\) is rationally injective.
If \(M\) is a spin manifold, then we have the KO-fundamental class \([M]_{\KO} \in \KO_n(M)\).
Suppose that we have a map \(f \colon M \to \Bfree \Gamma\), then \(\nu(f_\ast[M]_{\KO}) \in \KO_n(\Cstar \Gamma)\) equals the index of the Dirac operator of \(M\) twisted by \(f^\ast \mathcal{L}_\Gamma\), where \(\mathcal{L}_\Gamma \to \Bfree \Gamma\) is the \Mishchenko-bundle.
In particular, the Rosenberg index is given as
\[
 \alpha(M) = \nu(c_\ast [M]_{\KO}) \in \KO_n(\Cstar \pi_1 M),
\]
where \(c \colon M \to \Bfree \pi_1 M\) is the classifying map of the fundamental group.
Together with the strong Novikov conjecture, this reduces the study of the Rosenberg index to computations in (KO-)homology.
If we are only interested in rational information, we can apply the \emph{Pontryagin character} to reduce this further to ordinary (co)homology.
The (homological) Pontryagin character is the composition of the (homological) Chern character with complexification and yields an isomorphism
\[
 \ph \colon \ \KO_n(X) \otimes \Q \xrightarrow{\cong} \bigoplus_{k \in \Z} \HZ_{n + 4k}(X; \Q).
\]
Applied to the KO-fundamental class this yields \(\ph([M]_{\KO}) = [M] \cap \AhatClass(M)\),
where \([M] \in \HZ_n(M)\) is the ordinary fundamental class and \(\AhatClass(M) \in \HZ^{4\ast}(M; \Q)\) the total \(\Ahat\)-class of \(M\).
Given any map \(f \colon M \to \Bfree \Gamma\) and an ordinary cohomology class \(\theta \in \HZ^{k}(\Bfree \Gamma; \Q)\), we have
\begin{equation} \label{eq:higherAhat}
 \langle \ph(f_\ast [M]_{\KO}), \theta \rangle = \big\langle [M] \cap \AhatClass(M), f^\ast \theta \big\rangle = \big\langle [M], \AhatClass(M) \cup f^\ast \theta \big\rangle,
\end{equation}
where a number as on the right-hand side is called a \emph{higher \(\Ahat\)-genus} of \(M\).
The strong Novikov conjecture thus implies that the non-vanishing of any higher \(\Ahat\)-genus is an obstruction to positive scalar curvature.

In the case of a trivial fundamental group, the Rosenberg index reduces to Hitchin's \(\alpha\)-invariant~\cite{Hitchin:HarmonicSpinors}.
Stolz~\cite{Stolz:SimplyConnected} proved that a simply connected spin manifold of dimension \(\geq 5\) admits a positive scalar curvature metric if and only if its \(\alpha\)-invariant vanishes.
Note that dimensions \(1,2 \mod 8\) this is a \(\Z/2\)-valued invariant, so here it is important to consider the full integral information.
More generally, one might expect that the analogous statement holds for not necessarily simply connected manifolds using the (integral) Rosenberg index.
This is the content of the (unstable) Gromov--Lawson--Rosenberg conjecture.
Unfortunately, this conjecture is false in general~\cite{Schick:Counterexample}.
However, in its stead, one can consider a \emph{stable} version of the Gromov--Lawson--Rosenberg conjecture as introduced by Rosenberg and Stolz~\cite{RosenbergStolz:StableGLR}.
The stable conjecture is motivated by the observation that the receptacle of the Rosenberg index is \(8\)-periodic due to Bott periodicity, whereas the geometric problem concerning the existence of positive scalar curvature metrics does not feature such a periodicity in an obvious way.
To remedy this mismatch, one stabilizes the positive scalar curvature existence problem:
To this end, let \(B\) be a \emph{Bott manifold}, that is, a simply connected spin \(8\)-manifold such that \(\Ahat(B) = 1\).
Note that \(B\) can be chosen to be endowed with a Ricci flat metric~\cite{Joyce:Compact8Mfd}.
Moreover, using Bott periodicity, \(\alpha(M \times B^k) = \alpha(M) \in \KO_{n}(\Cstar \Gamma)\) for any \(k \in \N\).
We say that a manifold \emph{stably admits a positive scalar curvature metric} if there exists some \(k \in \N\) such that \(M \times B^k\) admits a positive scalar curvature metric.
\begin{Conjecture}[stable GLR conjecture] \label{conj:stable-glr}
 A spin manifold stably admits a positive scalar curvature metric if and only if \(\alpha(M) = 0 \in \KO_n(\Cstar \Gamma)\).
\end{Conjecture}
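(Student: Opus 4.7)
The plan is to establish the two implications of this equivalence by complementary means, using Bott stabilization to bridge the index-theoretic and geometric sides. The central algebraic identity is $\alpha(M \times B^k) = \alpha(M) \in \KO_n(\Cstar \pi_1 M)$, which follows from Bott periodicity together with $\Ahat(B) = 1$; combined with the Ricci-flat metric on $B$ furnished by Joyce, this shows that both conditions in the conjecture are preserved under multiplication by powers of $B$, so the equivalence reduces to a question about a single stable regime of spin manifolds rather than about $M$ in its ambient dimension.

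For the obstruction implication ``stably PSC $\Rightarrow \alpha(M)\neq 0$'', I would work on a stabilization $M \times B^k$ carrying a PSC metric (product of the given PSC metric on $M \times B^{k-1}$ and the Ricci-flat metric on $B$), apply the Schr\"odinger--Lichnerowicz formula to the Dirac operator twisted by the Mi\v{s}\v{c}enko bundle $\mathcal{L}_{M \times B^k}$, and read off the conclusion for $\alpha(M)$ via the periodicity identity above. For the existence implication ``$\alpha(M) \neq 0 \Rightarrow$ stably PSC'', the guiding template is Stolz's theorem in the simply connected case, where $\alpha$ reduces to Hitchin's invariant. The plan is to extend Stolz via three steps: (i)~a reformulation in terms of spin bordism over $\Bfree \pi_1 M$ and the assembly map $\nu$, identifying the relevant ``stable PSC'' locus with a natural subset of $\KO_n(\Cstar \pi_1 M)$ cut out by $\nu \circ c_\ast$; (ii)~Gromov--Lawson surgery, stabilized by multiplication with powers of $B$, to simplify representatives of a given bordism class down to a manageable list of generators; and (iii)~explicit construction of PSC metrics on those generators, using the Ricci-flat metric on $B$ as a buffer to accommodate the mod $8$ periodicity of the target.

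The hard part will be step (i) of the existence implication: the unstable analogue of the conjecture fails by Schick's counterexample, so the entire geometric content of the stable conjecture lies in what multiplication by $B^k$ buys. A precise identification of stable-PSC bordism classes with a natural subset of $\KO_n(\Cstar \pi_1 M)$ requires deep compatibility with the strong Novikov and Baum--Connes conjectures for $\pi_1 M$, and is currently available only for restricted classes of groups. A secondary obstacle arises in dimensions $1,2 \bmod 8$, where $\alpha$ carries $\Z/2$-valued integral information not seen by higher $\Ahat$-genera, so the Pontryagin character reduction sketched in Section~\ref{sec:Rosenberg-index} is insufficient there and the surgery arguments need ad hoc refinements to preserve the invariant.
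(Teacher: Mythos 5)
There is a fundamental issue here: the statement you are addressing is Conjecture~\ref{conj:stable-glr}, an \emph{open} conjecture, and the paper contains no proof of it. The only result in this direction is Stolz's Theorem~\ref{thm:Stolz-Stable-GLR}, which establishes the stable GLR conjecture \emph{conditionally}, under injectivity of the real Baum--Connes assembly map \(\mu\). Your plan does not close this gap: step~(i), which you yourself flag as the hard part, is precisely where injectivity of \(\mu\) (equivalently, enough of the strong Novikov conjecture for \(\pi_1 M\)) must enter, and you concede it is available only for restricted classes of groups --- that concession is not a technical caveat but the entire open content of the conjecture. Steps~(ii) and~(iii), stabilized Gromov--Lawson surgery over \(\Bfree \pi_1 M\) and construction of positive scalar curvature metrics on generators, reproduce the spin-bordism machinery behind Stolz's theorem; so at best your proposal re-derives the conditional statement, not the conjecture itself. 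Schick's counterexample to the unstable conjecture already shows that no purely surgery-theoretic argument can succeed without genuinely new input on the assembly map.

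Separately, watch the direction of the implications. The Schr\"odinger--Lichnerowicz argument applied to a PSC metric on \(M \times B^k\), combined with the periodicity identity \(\alpha(M \times B^k) = \alpha(M)\), yields \(\alpha(M) = 0\); thus the obstruction implication is ``stably PSC \(\Rightarrow \alpha(M) = 0\)'', and the genuinely conjectural converse is ``\(\alpha(M) = 0 \Rightarrow\) stably PSC''. Your formulations ``stably PSC \(\Rightarrow \alpha(M) \neq 0\)'' and ``\(\alpha(M) \neq 0 \Rightarrow\) stably PSC'' (taken verbatim from the sign as printed in the statement, which is evidently a typo for \(\alpha(M) = 0\)) are backwards: nonvanishing of \(\alpha\) is an obstruction to stable PSC, never a source of it, and the Lichnerowicz argument can only produce vanishing of the index in the presence of positive scalar curvature. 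Your remark about dimensions \(1, 2 \bmod 8\) is apt, but it only reinforces the point that rational higher-\(\Ahat\)-genus arguments cannot suffice and that the integral injectivity hypothesis is indispensable.
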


Stolz proved that the stable conjecture holds whenever the (real version of the) Baum--Connes assembly map is injective:

\begin{Theorem}[Stolz~{\cite[Theorem~3.10]{Stolz:ManifoldsOfPSC}}] \label{thm:Stolz-Stable-GLR}
 The stable GLR conjecture holds if the Baum--Connes assembly map \(\mu \colon \KO_\ast^\Gamma(\Eub \Gamma) \to \KO_\ast(\Cstar \Gamma)\) is injective.
\end{Theorem}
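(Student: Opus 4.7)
The forward direction is the routine one: if $M \times B^k$ carries a positive scalar curvature metric, the twisted Schr\"odinger--Lichnerowicz argument recalled in Section~\ref{sec:Rosenberg-index} forces $\alpha(M \times B^k) = 0$, and since $\Ahat(B) = 1$ identifies $\alpha(B)$ with the Bott generator of $\KO_8(\R)$, multiplicativity of the index together with Bott periodicity in $\KO_\ast(\Cstar \Gamma)$ yields $\alpha(M) = 0$. This step uses no hypothesis on $\Gamma$.

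For the converse, assume $\alpha(M) = 0$ and set $\Gamma = \pi_1 M$, $n = \dim M$. I would argue in three stages. First, the Gromov--Lawson--Rosenberg surgery theorem, in the parametrized form with a reference map to $\Bfree \Gamma$, shows that for $n \geq 5$ the existence of a PSC metric on a spin $n$-manifold $N$ depends only on the class $[N, c_N] \in \Omega_n^{\mathrm{spin}}(\Bfree \Gamma)$; writing $\Omega_n^{\mathrm{spin},+}(\Bfree \Gamma)$ for the subgroup of classes admitting a PSC representative, it therefore suffices to prove that $[M \times B^k, c_{M \times B^k}]$ lies in $\Omega_{n+8k}^{\mathrm{spin},+}(\Bfree \Gamma)$ for some $k$. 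Second, the substantive technical input -- Stolz's bordism-theoretic computation -- relates the quotient $R_\ast(\Bfree \Gamma) \coloneqq \Omega_\ast^{\mathrm{spin}}(\Bfree \Gamma)/\Omega_\ast^{\mathrm{spin},+}(\Bfree \Gamma)$ and PSC-bordism to a $\KO$-theoretic target via Dirac-index maps, and compares the resulting long exact sequence to the Baum--Connes one. After stabilization by the Bott manifold $B$, which geometrically realizes the algebraic inversion of the Bott class so that connective $\KO$ becomes periodic, the comparison becomes an isomorphism onto the Baum--Connes left-hand side, so that $[M, c_M]$ is detected precisely by the class $\iota_\ast c_\ast [M]_{\KO} \in \KO_n^\Gamma(\Eub \Gamma)$.

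Third, Baum--Connes enters: by assumption one has $\alpha(M) = \mu(\iota_\ast c_\ast [M]_{\KO}) = 0$, and injectivity of $\mu$ then yields $\iota_\ast c_\ast [M]_{\KO} = 0$. By the second stage this means $[M \times B^k, c_{M \times B^k}] \in \Omega_{n+8k}^{\mathrm{spin},+}(\Bfree \Gamma)$ for some $k$, and by the first stage $M \times B^k$ therefore admits a PSC metric. The main obstacle is the second stage: Stolz's computation requires a delicate combination of geometric surgery constructions (notably $\mathbb{HP}^2$-bundle arguments used to realize sufficiently many PSC-bordisms) with a comparison of long exact sequences between PSC-bordism and operator $\KO$-theory, carried out uniformly in the Bott direction. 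Once that identification is in place, Baum--Connes injectivity plays only the algebraic role of certifying vanishing on the topological side via the Rosenberg index.
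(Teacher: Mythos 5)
The paper does not prove this theorem; it is cited directly from Stolz's ICTP lecture notes (\cite[Theorem~3.10]{Stolz:ManifoldsOfPSC}), so there is no in-text argument to compare against. Your outline nevertheless matches the shape of Stolz's own proof. The forward direction, via multiplicativity of the index with $\alpha(B) \in \KO_8(\R)$ the Bott generator and the twisted Schr\"odinger--Lichnerowicz formula, is correct and standard. For the converse you correctly isolate the three ingredients: Gromov--Lawson--Rosenberg surgery to reduce PSC existence in dimension $n \geq 5$ to the class $[M, c_M] \in \Omega_n^{\mathrm{spin}}(\Bfree\Gamma)$; Stolz's $\mathbb{HP}^2$-bundle bordism computation, which after stabilization by the Bott manifold identifies the PSC obstruction with a periodic $\KO$-theoretic class; and Baum--Connes injectivity to convert $\alpha(M) = \mu(\iota_\ast c_\ast [M]_{\KO}) = 0$ into the vanishing of that topological obstruction. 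One point worth stating explicitly, since it is easy to gloss over: the hypothesis is injectivity of $\mu$ and not of the full Novikov assembly $\nu = \mu \circ \iota_\ast$, and $\iota_\ast$ may fail to be injective when $\Gamma$ has torsion. So the bordism computation in your second stage genuinely has to locate the stable obstruction in $\KO_n^\Gamma(\Eub\Gamma)$ rather than merely in $\KO_n(\Bfree\Gamma)$; you phrase this correctly by saying the comparison maps onto ``the Baum--Connes left-hand side,'' and this is precisely part of what makes that stage the technical heart of Stolz's argument, which your plan rightly flags as the main obstacle rather than claiming to reproduce in full.
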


This implies that there is a large scope of cases, where the stable GLR conjecture is known to hold, and that any counterexample would also be a counterexample to the Baum--Connes conjecture.
In a similar vein, Schick proposed a meta-conjecture~\cite[Conjecture~1.5]{Schick:ICM} asserting that every positive scalar curvature obstruction ``based on index theory of Dirac operators'' can be obtained from the Rosenberg index.

We conclude this section with the discussion of two subtle technicalities we have ignored so far.
\begin{Remark}[maximal versus reduced]
 The group \textCstar-algebras \(\Cstar\Gamma\) comes in (at least) two flavours, depending on which completion of the group ring is used.
 We have the \emph{reduced} group \textCstar-algebra \(\CstarRed \Gamma\) (which is the completion induced by the left-regular representation of \(\Gamma\)) and the \emph{maximal} group \textCstar-algebra \(\CstarMax \Gamma\) (which is the universal representation of \(\Gamma\)).
 In the discussion above, we have not stressed which one we use because for our purposes we can usually work with either one of them.
 First of all, there is a canonical quotient map \(\CstarMax \Gamma \to \CstarRed \Gamma\) which means that the maximal \textCstar-algebra a priori could potentially capture more obstructions.
 Moreover, viewing the \Mishchenko\ bundle as the ``universal flat bundle'', the maximal \textCstar-algebra is the more natural choice because any unitary representation of \(\Gamma\) factors through \(\CstarMax \Gamma\).
 On the other hand, the Baum--Connes conjecture is usually formulated and studied for the reduced group \textCstar-algebra (because surjectivity definitely fails in the maximal case).
 Fortunately, injectivity of the Baum--Connes assembly map for the reduced case implies injectivity in the maximal case and so the distinction becomes less relevant for our purposes even when we view the Rosenberg index through this lens.
\end{Remark}

\begin{Remark}[real versus complex]
 In the literature on the Baum--Connes conjecture, usually the case of complex K-theory and -homology is treated, whereas for the purposes of spin geometry and scalar curvature, real K-theory and -homology is the more appropriate choice.
 Fortunately, whether or not the Baum--Connes assembly map is an isomorphism does not depend on this choice,
 and after inverting \(2\), this is even true for injectivity and surjectivity separately~\cite{Baum-Karoubi:real-BC,Schick:real-vs-complex}.
 In particular, for the strong Novikov conjecture as formulated here the difference does not matter.
\end{Remark}

\section{Callias-type operators and codimension one obstructions} \label{sec:codim-1-obstr}
In this section, we will provide a proof of Theorem~\ref{thm:quantitative-codim1-obstr}.
This is a simplified synthesis of the previous approaches from \cite{Cecchini:CalliasTypePSC}, \cite[Section~6]{Cecchini:LongNeck} and \cite[Section~2]{Zeidler:band-width-estimates}.
We rely on the same index-theoretic setup which has its roots in work of Higson~\cite{Higson:CobordismInvariance} and Bunke~\cite{Bunke:RelativeIndexCallias}.

In this section, we fix an arbitrary unital Real \textCstar-algebra \(A\).
For the purposes of the main theorem of this note, one can keep the case \(A = \Cstar \Gamma\) with \(\Gamma\) some discrete group in mind.

Let \(W\) be a complete spin \(n\)-manifold together with a proper smooth \(1\)-Lipschitz function \(x \colon W \to \R\).
Fix some regular value \(a \in \R\) of \(x\) and set \(M \coloneqq x^{-1}(a)\).
Suppose furthermore that \(W\) is endowed with a smooth bundle \(E \to W\) of finitely generated projective Hilbert-\(A\)-modules furnished with a metric connection.
Let \(f \colon \R \to \R\) be some non-decreasing smooth function satisfying \(f(0) = 0\) and such that \(f^2 - f'\) is bounded below by a positive constant outside a~compact subset of~\(\R\).
In particular, the latter property includes the following two special cases:
\begin{enumerate}\itemsep=0pt
 \item[(i)] the function \(f\) is proper and Lipschitz,
 \item[(ii)] or there exists \(R > 0\) such that \(f(x) = \lambda_-\) for all \(x \leq -R\) and \(f(x) = \lambda_+\) for all \(x \geq R\), where \(\lambda_- < 0 < \lambda_+\) are constants.
\end{enumerate}
Denote the \(\Cl_n\)-linear spinor bundle of \(W\) by \(\SpinBdl_W\) and let \(\Dirac_{W,E}\) be the Dirac operator twisted by \(E\).
We then consider the operator
\[
 B = \Dirac_{W, E} \tensgr 1 + f(x) \tensgr \epsilon
\]
acting on compactly supported smooth sections of \(\SpinBdl_W \tensgr E \tensgr \Cl_{0,1}\).
Then \(B\) is an essentially self-adjoint regular Fredholm operator.
It has an index \(\ind(B) \in \KO_{n-1}(A)\) which satisfies
\[
 \ind(B) = \ind(\Dirac_{M, E|_M}) \in \KO_{n-1}(A).
\]
Moreover, we have the formula
\begin{equation}
B^2 = \Dirac^2_{W,E} \tensgr 1 + f'(x) \clm(\D x) \tensgr \epsilon + f(x)^2 \geq \Dirac^2_{W,E} \tensgr 1 + f(x)^2 - f'(x),
\label{eq:callias-estimate}
\end{equation}
where \(\clm\) denotes the Clifford action and we have used \(\|\mathrm{d} x\| \leq 1\) and \(f' \geq 0\).
For all of this we refer, e.g., to \cite[Section~6]{Cecchini:LongNeck}, \cite[Appendix~A]{Zeidler:band-width-estimates}.

\begin{Theorem}\label{thm:quantitative-codim-1}
 In the above setup, suppose that \(\ind(\Dirac_{M, E|_M}) \neq 0 \in \KO_{n-1}(A)\).
 Let \(I \subseteq \R\) be an interval of length \(d < 2 \pi / n\) such that the scalar curvature of \(W\) is bounded below by \(n(n-1)\) on the subset \(x^{-1}(I)\).
 Then
 \begin{equation*}
 \inf_{p \in W} \scal(p) \leq - n(n-1) \tan\left(\frac{nd}{4}\right)^2.
 \end{equation*}
 \end{Theorem}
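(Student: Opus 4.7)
The plan is to argue by contradiction, following the Callias-type strategy. Suppose $\inf_{p \in W} \scal(p) > -n(n-1)\tan(nd/4)^2$, so there exists $\eta > 0$ with $\scal \geq -n(n-1)\tan(nd/4)^2 + \eta$ pointwise on $W$. After translating in the $x$-variable we may assume $I = [-d/2, d/2]$. The goal is to engineer the potential $f$ so that the Callias identity \eqref{eq:callias-estimate}, combined with a sharpened lower bound on $\Dirac_{W,E}^2$, forces $B^2$ to be bounded below by a positive constant uniformly on $W$. This would imply that $B$ is invertible as a regular operator, hence $\ind(B) = 0 \in \KO_{n-1}(A)$, contradicting $\ind(B) = \ind(\Dirac_{M, E|_M}) \neq 0$.

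Concretely, I would pick a parameter $c \in (0, n/2)$ close to $n/2$ and define
\[
f(x) = \begin{cases} -c\tan(cd/2), & x \leq -d/2, \\ c\tan(cx), & x \in (-d/2, d/2), \\ c\tan(cd/2), & x \geq d/2. \end{cases}
\]
Then $f$ is non-decreasing, Lipschitz, satisfies $f(0)=0$, and is of the form (ii); a direct computation gives $f' - f^2 = c^2$ on $I$ and $f^2 - f' = c^2\tan(cd/2)^2$ off $I$. The sharp constant is then obtained by invoking the refined Lichnerowicz--Friedrich bound $\Dirac_{W,E}^2 \geq \tfrac{n}{4(n-1)}\scal$, which follows from the Schr\"odinger--Lichnerowicz identity (valid when the twisting curvature of $E$ is non-negative, e.g.~vanishing) together with the pointwise Cauchy--Schwarz estimate $|\Dirac_{W,E}\psi|^2 \leq n\,|\nabla\psi|^2$. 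Substituted into \eqref{eq:callias-estimate} this yields $B^2 \geq \tfrac{n}{4(n-1)}\scal + f^2 - f'$. On $I$ this is at least $n^2/4 - c^2 > 0$ since $c < n/2$. Off $I$ it is at least $c^2\tan(cd/2)^2 - \tfrac{n^2}{4}\tan(nd/4)^2 + \tfrac{n\eta}{4(n-1)}$, and since $c^2\tan(cd/2)^2 \to (n/2)^2\tan(nd/4)^2$ as $c \uparrow n/2$, for $c$ chosen sufficiently close to $n/2$ this off-$I$ expression is also strictly positive and uniformly bounded below. Fixing such a $c$ yields the desired contradiction.

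The main technical obstacle is obtaining the sharp factor $n$ in the argument of $\tan$ rather than the weaker $\sqrt{n(n-1)}$: the naive bound $\Dirac^2 \geq \scal/4$ alone would only produce $\tan(\sqrt{n(n-1)}d/4)^2$ in place of $\tan(nd/4)^2$. The Friedrich-type refinement $\Dirac_{W,E}^2 \geq \tfrac{n}{4(n-1)}\scal$ is what closes this gap, and one must check that this refinement is available for the specific bundle $E$ in question; this is automatic for \Mishchenko-type bundles equipped with a flat connection, which is precisely the setting relevant to the applications to the Rosenberg index.
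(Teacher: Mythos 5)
Your strategy coincides with the paper's: run the Callias operator argument with the potential built from $c\tan(cx)$, and sharpen $\Dirac^2 \geq \scal/4$ to the Friedrich bound $\Dirac_{W,E}^2 \geq \tfrac{n}{4(n-1)}\scal$ (valid for flat $E$) to obtain the factor $n$ rather than $\sqrt{n(n-1)}$ in the tangent. The quantitative computations (on-$I$ bound $\tfrac{n^2}{4}-c^2$, off-$I$ bound $c^2\tan(cd/2)^2 - \tfrac{n^2}{4}\tan(nd/4)^2 + \tfrac{n\eta}{4(n-1)}$, and the one-sided limit $c\uparrow n/2$) are all correct.

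There is, however, a genuine gap: the piecewise function $f$ you write down is only Lipschitz, not smooth, at $x=\pm d/2$ (the one-sided derivatives are $c^2\sec(cd/2)^2$ and $0$). The setup explicitly requires a \emph{smooth} $f$ for the Callias-type index theory. Worse, the obvious fix of mollifying $f$ in a small neighborhood of $\pm d/2$ does not work if the smoothing spills outside $I$: there the scalar curvature lower bound is only $-n(n-1)\tan(nd/4)^2+\eta$, while the mollified $f'$ can still be as large as $c^2\sec(cd/2)^2$, which drives $f^2-f'$ down to $\approx -c^2$ and makes $\Psi$ negative for $c$ near $n/2$. One therefore has to smooth strictly inside $I$, i.e.\ let $f$ equal $c\tan(cx)$ only on $|x|\leq\tilde d/2$ for some $\tilde d<d$ and bring $f'$ to zero on $\tilde d/2<|x|<d/2$ where $\scal\geq n(n-1)$ still holds. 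The off-$I$ value of $|f|$ is then $c\tan(c\tilde d/2)$, so closing the argument requires the two-parameter limit $c\uparrow n/2$, $\tilde d\uparrow d$ — exactly what the paper encodes by introducing $\tilde d$ and $r$ and defining $f$ as the solution of the ODE $f'=\xi(x,f)$ with $\xi$ a cutoff of $y^2+r^2$. So the idea is right, but the smoothing is not a routine footnote; it forces the extra parameter and is the reason the paper's proof looks the way it does.
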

 \begin{proof}
 We can assume without loss of generality that \(I = [-d/2,d/2]\).
 Then suppose, by contraposition, that there exists \(\sigma < \tan\big(\frac{nd}{4}\big)^2\) such that \(\scal(p) \geq -n(n-1) \sigma\) for all \(p \in W\).
 Choose \(0 < \tilde{d} < d\) such that \(\sigma < \tan\big(\frac{n\tilde{d}}{4}\big)^2 < \tan\big(\frac{nd}{4}\big)^2\) and \(0 < r < n/2\) such that
 \begin{equation}
 \frac{n^2 \sigma}{4} < r^2 \tan\big(r\tilde{d}/2\big)^2. \label{eq:choose-r}
 \end{equation}
 Choose a smooth function \(\xi \colon \R^2 \to \R\) which satisfies \(0 \leq \xi(x,y) \leq y^2 + r^2\) for all \((x,y) \in \R^2\), \(\xi(x,y) = y^2 + r^2\) for \(|x| \leq \tilde{d}/2\) and \(\xi(x,y) = 0\) for \(|x| \geq d/2\).
 Let \(f\) be the unique maximal solution of the initial value problem
 \begin{gather*}
 f'(x) = \xi(x,f(x)), \\
 f(0) = 0.
 \end{gather*}
 By construction, \(f\) is a smooth non-decreasing function defined on some interval containing \(0\).
 We first claim that \(f\) is defined on the entire real line.
 To this end, consider the auxilliary function \(\bar{f} \colon (-\pi/(2 r), \pi/(2r)) \to \R\), \(\bar{f}(x) = r \tan(r x)\).
 Note that this is the unique solution of \(\bar{f}'(x)= \bar{f}(x)^2 + r^2\), \(\bar{f}(0) = 0\) and that \(\pi/(2r) > \pi /n > d/2\).
 By the choice of \(\xi\), \(f'(x) \leq f(x)^2 + r^2\) which implies that \(|f| \leq |\bar{f}|\) as long as both are defined.
 This implies that \(f\) exists at least for all \(|x| \leq d/2\).
 But \(\xi(x,y) = 0\) for \(|x| \geq d/2\), so \(f\) must continue for all times and remain constant on each component of \(\R \setminus (-d/2,d/2)\).
 Moreover, by monotonicity and because \(f(x) = \bar{f}(x)\) for \(|x| \leq \tilde{d}/2\), we obtain the estimate
 \begin{equation}
 |f(x)| \geq |f(\pm \tilde{d}/2)| = \bar{f}(\tilde{d}/2) = r \tan\big(r \tilde{d} / 2\big) \qquad \text{for all \(|x| \geq d/2\).}
 \label{eq:outside-bound}
 \end{equation}

 Now we consider the operator
 \(
 B = \Dirac \tensgr 1 + f(x) \tensgr \epsilon
 \)
 as above, where \(\Dirac = \Dirac_{W,E}\).
 Then \linebreak using~\eqref{eq:callias-estimate} together with the Schr\"odinger--Lichnerowicz formula and the Friedrich estimate (compare~\cite[Section~3.2]{Cecchini:LongNeck}), we obtain
 \begin{align*}
 B^2 &\geq \Dirac^2 \otimes 1 + f(x)^2 - f'(x) \\
 &\geq {\frac{n \scal}{4(n-1)} + f(x)^2 - f'(x)}\ \eqqcolon \Psi.
 \end{align*}
 We denote the latter function by \(\Psi \colon W \to \R\) and will show that it is bounded below by a positive constant to finish the proof.
 By construction of \(f\), we always have \(f(x)^2 - f'(x) \geq -r^2\).
 For \(|x| \leq d/2\), this implies together with the scalar curvature bound on \(x^{-1}(I)\) that
 \[
 \Psi(p) \geq \frac{n^2}{4} - r^2 > 0 \qquad \text{if \(|x(p)| \leq d/2\).}
 \]
 Moreover, for \(|x| \geq d/2\), we have \(f'(x) = 0\) and so \eqref{eq:choose-r} and \eqref{eq:outside-bound} imply
 \begin{align*}
\Psi(p) \geq - \frac{n^2 \sigma}{4} + f(x(p))^2 \geq - \frac{n^2 \sigma}{4} + r^2 \tan\big(r \tilde{d} / 2\big)^2 > 0 \qquad \text{if \(|x(p)| \geq d/2\).}
 \end{align*}
 This proves that \(B^2 \geq \inf\limits_{p \in W} \Psi(p) > 0\) and hence \(\ind(\Dirac_{M, E|_M}) = \ind(B) = 0\).
 \end{proof}
 \begin{Remark}
 The argument implies the following more general observation:
 If \(\ind(\Dirac_{M, E|_M})\) does not vanish, then it follows that
 \[
 \inf_{p \in W} \left( \frac{n \scal(p)}{4(n-1)} + f(x(p))^2 - f'(x(p)) \right) \leq 0
 \]
 for any such smooth function \(f \colon \R \to \R\) allowed in this setup.
 By choosing different functions one can thereby obtain various pointwise constraints on how scalar curvature can be distributed along \(W\).
 \end{Remark}

 Note that this implies Theorem~\ref{thm:quantitative-codim1-obstr} from the introduction by taking \(W\) to be the connected covering of \(X\) such that \(\pi_1 W = \pi_1 M\), the bundle \(E = \mathcal{L}_W\) to be the \Mishchenko-bundle, and the function \(x\) to be (a smooth approximation of) the signed distance function to \(M \subseteq W\).

Rescaling the scalar curvature lower bound on the tubular neighborhood implies the first part of Corollary~\ref{cor:noPSc-and-bandWidth}.
 How the second part of Corollary~\ref{cor:noPSc-and-bandWidth} follows from this theorem will be explained in Corollary~\ref{cor:KOBandEstimate} below.

\section[Ahat-largeness]{\(\boldsymbol{\Ahat}\)-largeness} \label{sec:largeness}
In this section, we review various notions of largeness for Riemannian manifolds following Gromov and Lawson~\cite{Gromov:MetricInequalitiesScalar, gromov-lawson:spin-and-scalar-in-presence,GromovLawson:PSCDiracComplete}.
For a textbook treatment see also~\cite[Chapter~IV, Section~5]{LawsonMichelsohn:SpinGeometry}.

We need the notion of the (\(\Ahat\)-)mapping degree in different setups.
To this end, let \(f \colon X \to Y\) be a smooth map between oriented manifolds (possibly with boundary) and assume that we are in one of the two following situations:
\begin{enumerate}\itemsep=0pt
 \item[(i)] the map \(f \colon X \to Y\) is proper and takes \(\partial X\) to \(\partial Y\),
 \item[(ii)] or \(Y\) is a closed manifold with a fixed base-point \(\ast \in Y\) and \(f\) is \emph{constant at infinity}, that is, it maps both \(\partial X\) and the complement of a compact subset to the base-point \(\ast \in Y\). 
\end{enumerate}
In these situations and if \(\dim X = \dim Y = n\), then \(f\) has a well-defined mapping degree \(\deg(f) \in \Z\).
It can be characterized in terms of differential forms, that is,
\[
 \int_X f^\ast \omega = \deg(f)\ \int_{Y} \omega
\]
for all compactly supported \(n\)-forms \(\omega\) on \(Y\) which vanish on \(\partial Y\).
More generally, if \(\dim X = n\) and \(\dim Y = k\) are not necessarily equal, one defines the \(\Ahat\)-degree of \(f\) by the formula
\[
 \int_X \AhatClass(X) \wedge f^\ast \omega = \AhatDeg(f)\ \int_Y \omega
\]
for all compactly supported \(k\)-forms \(\omega\) on \(Y\) which vanish on \(\partial Y\).
Here \(\AhatClass(X)\) denotes the total \(\Ahat\)-class of \(X\).
Alternatively, the \(\AhatDeg(f)\) can be described as the \(\Ahat\)-genus of the transversal pre-image \(f^{-1}(y)\) of a point \(y \in Y\) (which in case (ii) must not be equal to the base-point).
Note that the \(\Ahat\)-degree can only be non-zero if \(k \leq n\).

In the following, we fix a base-point \(\ast \in \Sphere^k\) in the sphere of each dimension \(k \geq 0\).

\begin{Definition}
 Let \(\varepsilon > 0\).
 We say a connected oriented Riemannian manifold \(X\) (possibly with boundary) of dimension \(n\) is called \emph{\(\varepsilon\)-\(\Ahat\)-hyperspherical} in dimension \(k\) if there exists an \(\varepsilon\)-Lipschitz map \(f \colon X \to \Sphere^k\) which is constant at infinity and has non-zero \(\Ahat\)-degree.
\end{Definition}

\begin{Remark}
 If \(k = 0\) in the definition above, then \(X\) must be a closed manifold such that \(\Ahat(X) \neq 0\).
\end{Remark}

We now review various notions of largeness.
The first three go back in one form or another to Gromov and Lawson~\cite{gromov-lawson:spin-and-scalar-in-presence,GromovLawson:PSCDiracComplete}, whereas \emph{iso-enlargeability} was introduced more recently by Gromov~\cite{Gromov:MetricInequalitiesScalar}.
The notion of \emph{\(\Ahat\)-iso-enlargeability} defined here is a straightforward generalization of the latter.

\begin{Definition}[largeness properties]
 Let \(X\) be a connected oriented Riemannian manifold of dimension \(n\) without boundary. We say that \(X\) is
 \begin{itemize}\itemsep=0pt
 \item \(\Ahat\)-\emph{hypereuclidean} in dimension \(k\), if there exists a proper Lipschitz map \(X \to \R^k\) of non-zero \(\Ahat\)-degree,
 \item \(\Ahat\)-\emph{hyperspherical} in dimension \(k\), if \(X\) is \(\varepsilon\)-\(\Ahat\)-hyperspherical in dimension \(k\) for each \(\varepsilon > 0\),
 \item \(\Ahat\)-\emph{enlargeable} in dimension \(k\), if for each \(\varepsilon >0 \), there exists a connected Riemannian covering \(\bar{X} \to X\) such that \(\bar{X}\) is \(\varepsilon\)-\(\Ahat\)-hyperspherical in dimension \(k\),
 \item \(\Ahat\)-\emph{iso-enlargeable} in dimension \(k\), if for each \(\varepsilon > 0 \), there exists a Riemannian local isometry \(\bar{X} \to X\) (where \(\bar{X}\) is allowed to be incomplete or have a boundary) such that \(\bar{X}\) is \(\varepsilon\)-\(\Ahat\)-hyperspherical in dimension \(k\).
 \end{itemize}
\end{Definition}
If \(k = 0\) in any of these notions, then this just means that \(X\) is closed and has non-zero \(\Ahat\)-genus.
In the other extreme case \(k = n\), the \(\Ahat\)-degree reduces the usual degree and this yields the usual notions of hypereuclidean, hyperspherical, enlargeable and iso-enlargeable.
If we omit the dimension \(k\) when using any of these notions, we will mean that the corresponding property holds in some dimension \(0 \leq k \leq n\).

The following implications hold.
\begin{equation}\label{eq:largeness-implications}
\begin{tikzcd}[cells={nodes={draw=black, rectangle,anchor=center,minimum height=2em}}, row sep=small]
 \text{\(X\) admits a connected covering which is (\(\Ahat\)-)hypereuclidean } \dar[Rightarrow]\\
 \text{\(X\) admits a connected covering which is (\(\Ahat\)-)hyperspherical} \dar[Rightarrow]\\
 \text{\(X\) is (\(\Ahat\)-)enlargeable}\dar[Rightarrow] \\
 \text{\(X\) is (\(\Ahat\)-)iso-enlargeable}
\end{tikzcd}
\end{equation}
The first is due to the fact that Euclidean space \(\R^n\) is hyperspherical, and the others follow immediately from the definitions.

However, it is an open question whether any of these implications admit a converse for closed manifolds.
This is already the case for the non-\(\Ahat\)-variants of these notions.
Indeed, the question of whether every closed iso-enlargeable manifold is enlargeable was already posed by Gromov in~\cite[p.~659]{Gromov:MetricInequalitiesScalar}.
Brunnbauer and Hanke~\cite[Theorem~1.4]{BrunnbauerHanke} showed that there exist enlargeable closed manifolds whose \emph{universal} covering is not hyperspherical.
However, their construction does not prove the non-existence of an intermediate hyperspherical (or even hypereuclidean) covering.
It appears that given what we know at present we cannot even exclude the possibility that every iso-enlargeable manifold might admit a hypereuclidean covering, even though this looks unlikely.

\section{Index-theoretic notions of width} \label{sec:KO-width}
In this section, we say that a \emph{band} is a compact manifold \(V\), the boundary of which is decomposed into distinguished parts \(\partial V = \partial_- V \sqcup \partial_+ V\), where \(\partial_\pm V\) are unions of components.
This notion appeared first in \cite{Gromov:MetricInequalitiesScalar}, where such objects are called ``compact proper bands''.
A \emph{band map} is a~smooth map \(f \colon V \to V^\prime\) between bands such that \(f(\partial_\pm V) \subseteq  \partial_\pm V'\).

\begin{Definition} Let \((X,g)\) be some Riemannian manifold and \(\mathcal{V}\) some class of bands.
 We set
 \[\width_{\mathcal{V}}(X, g) \in [0, \infty]\]
 to be the (possibly infinite) supremum of all \( \width(V, \phi^\ast g)\), where \((V, \phi)\) varies through all \(V \in \mathcal{V}\) and local diffeomorphisms \(\phi \colon V \to X\).
 If no such local diffeomorphism exists, we define \(\width_{\mathcal{V}}(X, g) = 0\).
\end{Definition}

We consider the following examples of classes of bands.
\begin{description}\itemsep=0pt
 \item[ \(\hat{\mathcal{T}}\):]the class of all \emph{overtorical} bands~\cite{Gromov:MetricInequalitiesScalar}, that is, each \(V \in \hat{\mathcal{T}}\) admits a smooth band map \(V \to \Torus^{n-1} \times [-1,1]\) of non-zero degree.
 \item[ \(\hat{\mathcal{T}}_{\Ahat}\):]the class of all \emph{\(\Ahat\)-overtorical} bands, that is, each \(V \in \hat{\mathcal{T}}_{{\Ahat}}\) admits a smooth band map \(V \to \Torus^{k-1} \times [-1,1]\) of non-zero \(\Ahat\)-degree.
 \item[\(\mathcal{KO}\):] the class of all \(\mathcal{KO}\)-bands~\cite{Zeidler:band-width-estimates}, that is, each \(V \in \mathcal{KO}\) is spin and admits a flat bundle \(E \to V\) of finitely generated projective Hilbert-\(A\)-modules for some unital Real \textCstar-algebra \(A\) such that the twisted Dirac operator on \(\partial_\pm V\) has non-vanishing \(\ind(\Dirac_{\partial_- V, E|_{\partial_- V}}) \neq 0 \in \KO_{n-1}(A)\).
\end{description}

\begin{Proposition} \label{prop:overtorical-implies-KO}
 A band \(V \in \hat{\mathcal{T}}_{\Ahat}\) that is spin is a \(\mathcal{KO}\)-band.
\end{Proposition}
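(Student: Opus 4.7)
The plan is to construct the required flat bundle on $V$ by pulling back the Mi\v{s}\v{c}enko bundle of the torus. Let $\phi \colon V \to \Torus^{k-1} \times [-1,1]$ be the given band map with $\AhatDeg(\phi) \neq 0$, let $\pi \colon \Torus^{k-1} \times [-1,1] \to \Torus^{k-1}$ denote the projection onto the first factor, and set $\psi \coloneqq \pi \circ \phi$. I would take $A \coloneqq \Cstar \Z^{k-1}$ and $E \coloneqq \psi^\ast \mathcal{L}_{\Torus^{k-1}}$. Then $E$ is a flat bundle of finitely generated projective Hilbert-$A$-modules, since the \Mishchenko\ bundle is, and pullbacks of flat connections are flat. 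Writing $\psi_- \coloneqq \psi|_{\partial_- V}$, which under the natural identification $\Torus^{k-1} \times \{-1\} \cong \Torus^{k-1}$ agrees with $\phi|_{\partial_- V}$, the description of the Rosenberg index recalled in Section~\ref{sec:Rosenberg-index} identifies the index of the twisted Dirac operator on $\partial_- V$ with coefficients in $E|_{\partial_- V}$ as $\nu((\psi_-)_\ast [\partial_- V]_{\KO}) \in \KO_{n-1}(\Cstar \Z^{k-1})$, where $\nu \colon \KO_{n-1}(\Torus^{k-1}) \to \KO_{n-1}(\Cstar \Z^{k-1})$ is the Novikov assembly map for $\Torus^{k-1} = \Bfree \Z^{k-1}$.

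The key computational step is to verify that $\AhatDeg(\psi_-) = \AhatDeg(\phi) \neq 0$. This follows from a cobordism argument: choosing a common regular value $y \in \Torus^{k-1}$ of $\psi$, of $\psi|_{\partial_- V}$, and of $\psi|_{\phi^{-1}(\Torus^{k-1} \times \{0\})}$, the preimage $\phi^{-1}(\{y\} \times [-1,0])$ is a compact spin $(n{-}k{+}1)$-manifold with boundary $\psi_-^{-1}(y) \sqcup \phi^{-1}(y,0)$. The transversal preimage $\phi^{-1}(y,0)$ computes $\AhatDeg(\phi)$ by the alternative description recalled in Section~\ref{sec:largeness}, and spin-cobordism invariance of the $\Ahat$-genus yields $\Ahat(\psi_-^{-1}(y)) = \Ahat(\phi^{-1}(y,0)) = \AhatDeg(\phi) \neq 0$. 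Applying the Pontryagin character together with~\eqref{eq:higherAhat}, paired with the fundamental class $\theta \in \HZ^{k-1}(\Torus^{k-1}; \Q)$, gives
\[
\langle \ph((\psi_-)_\ast [\partial_- V]_{\KO}), \theta \rangle = \langle [\partial_- V], \AhatClass(\partial_- V) \cup \psi_-^\ast \theta \rangle = \AhatDeg(\psi_-) \neq 0.
\]
Hence $(\psi_-)_\ast [\partial_- V]_{\KO}$ is non-zero in $\KO_{n-1}(\Torus^{k-1}) \otimes \Q$.

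To conclude, I would invoke the fact that the Baum--Connes conjecture is known for the torsion-free abelian group $\Z^{k-1}$, so that $\nu$ is rationally injective. Consequently $\nu((\psi_-)_\ast [\partial_- V]_{\KO}) \neq 0 \in \KO_{n-1}(\Cstar \Z^{k-1})$, and the pair $(A, E)$ witnesses $V \in \mathcal{KO}$. I expect the main obstacle to be the cobordism identification of $\AhatDeg(\psi_-)$ with $\AhatDeg(\phi)$ via a simultaneous choice of regular values; the remaining steps are direct applications of the \Mishchenko\ bundle formalism and the Pontryagin-character computation reviewed in Section~\ref{sec:Rosenberg-index}.
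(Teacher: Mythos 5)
Your proposal is correct and follows essentially the same route as the paper's proof: pull back the \Mishchenko\ bundle of the torus along the band map, identify the boundary index as the image under the Novikov assembly map of $\Z^{k-1}$, reduce to rational non-vanishing of $(f|_{\partial_- V})_\ast[\partial_- V]_{\KO}$ via the strong Novikov conjecture for $\Z^{k-1}$, and compute this by pairing the Pontryagin character with the fundamental cohomology class of $\Torus^{k-1}$ as in~\eqref{eq:higherAhat}. The only addition is that you justify the equality $\AhatDeg(f|_{\partial_- V}) = \AhatDeg(f)$ by an explicit spin-cobordism argument between transversal preimages, whereas the paper records this equality without further comment; that is a genuine detail worth filling in, but it does not change the structure of the argument.
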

\begin{proof}
 Let \(V\) be an \(\Ahat\)-overtorical band.
 That means that there exists a band map \(f \colon V \to \Torus^{k-1} \times [-1,1]\) of non-zero \(\Ahat\)-degree.
 Let \(\mathcal{L}_{\Torus^{k-1}} \to \Torus^{k-1} \times [-1,1]\) be the Mi\v{s}\v{c}enko\ bundle of \(\Torus^{k-1} \times [-1,1]\).
 We set \(E \coloneqq f^\ast \mathcal{L}\).
 This is a flat bundle of finitely generated projective Hilbert-\(\Cstar(\Z^{k-1})\)-modules over \(V\).
 We will show that \(\ind(\Dirac_{\partial_- V, E})\) is rationally non-zero.
 To this end, observe that
 \[
 \ind(\Dirac_{\partial_- \bar{V}, E}) = \nu(f_\ast [\partial_- V]_{\KO}),
 \]
 where \(\nu\) is the Novikov assembly map of \(\Z^{k-1}\).
 Since \(\Z^{k-1}\) satisfies the strong Novikov conjecture, it is enough to show that
 \(f_\ast [\partial_- V]_{\KO}\) is rationally non-zero.
 Indeed, it follows from \eqref{eq:higherAhat} that
 \[
 \langle \ph(f_\ast [\partial_- V]_{\KO}), \theta \rangle = \AhatDeg(f|_{\partial_- V}) = \AhatDeg(f) \neq 0
 \]
 for each cohomology class \(\theta \in \HZ^{k-1}\big(\Torus^{k-1}\big)\) such that \(\big\langle \big[\Torus^{k-1}\big], \theta \big\rangle = 1\).
\end{proof}

\begin{Corollary}\label{cor:KO-width-bound-Ahat}
 A Riemannian spin manifold \((X,g)\) satisfies
 \[
 \width_{\mathcal{KO}}(X, g) \geq \width_{\hat{\mathcal{T}}_{\Ahat}}(X, g).
 \]
\end{Corollary}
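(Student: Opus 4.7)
The plan is to show that whenever a band $V$ enters the supremum defining $\width_{\hat{\mathcal{T}}_{\Ahat}}(X,g)$, it also enters the supremum defining $\width_{\mathcal{KO}}(X,g)$ with the same value $\width(V, \phi^\ast g)$. The desired inequality then follows by comparing suprema over nested sets.

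First I would observe that because $X$ is spin, any local diffeomorphism $\phi \colon V \to X$ pulls back the spin structure of $X$ to a spin structure on $V$. Hence, any $V \in \hat{\mathcal{T}}_{\Ahat}$ that admits a local diffeomorphism into $X$ automatically inherits a spin structure, so the spinness hypothesis in Proposition~\ref{prop:overtorical-implies-KO} is met. This is the one place where we use that $X$ itself is assumed to be spin.

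Next, I would invoke Proposition~\ref{prop:overtorical-implies-KO} directly: any such spin $V \in \hat{\mathcal{T}}_{\Ahat}$ is a $\mathcal{KO}$-band, by taking $A = \Cstar(\Z^{k-1})$ and the flat bundle $E = f^\ast \mathcal{L}_{\Torus^{k-1}}$ associated to the $\Ahat$-overtorical band map $f$. This shows that the set of pairs $(V, \phi)$ contributing to $\width_{\hat{\mathcal{T}}_{\Ahat}}(X,g)$ is contained in the set of pairs contributing to $\width_{\mathcal{KO}}(X,g)$, and moreover for each such pair the quantity $\width(V, \phi^\ast g)$ being taken the supremum of is identical on both sides.

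Finally, I would conclude by the monotonicity of the supremum: the supremum over a subset is no larger than the supremum over the whole. If the subset is empty, meaning no local diffeomorphism from any $V \in \hat{\mathcal{T}}_{\Ahat}$ into $X$ exists, then $\width_{\hat{\mathcal{T}}_{\Ahat}}(X,g) = 0$ by the convention in the definition and the inequality is trivial. I do not see any real obstacle here; the argument is essentially a bookkeeping observation once Proposition~\ref{prop:overtorical-implies-KO} is in hand, with the only subtlety being the automatic transfer of the spin structure via a local diffeomorphism.
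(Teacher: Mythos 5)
Your proposal is correct and is exactly the intended argument. The paper treats the corollary as an immediate consequence of Proposition~\ref{prop:overtorical-implies-KO} without writing out the details, and you have supplied them accurately: the supremum on the right is taken over pairs $(V,\phi)$ with $V \in \hat{\mathcal{T}}_{\Ahat}$ and $\phi$ a local diffeomorphism into $X$, each such $V$ inherits a spin structure by pulling back that of $X$ along $\phi$, hence $V$ is a $\mathcal{KO}$-band by Proposition~\ref{prop:overtorical-implies-KO}, so the same pair $(V,\phi)$ contributes the same value $\width(V,\phi^\ast g)$ to the supremum on the left; the inequality then follows by monotonicity of the supremum, with the degenerate empty case handled by the convention that the width is $0$. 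You also correctly locate the role of the spin hypothesis on $X$: it is there precisely to guarantee the automatic transfer of the spin structure to $V$.
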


Given a closed manifold \(M\), then whether or not \(\width_{\mathcal{V}}(M, g) = \infty\) does not depend on the choice of the Riemannian metric \(g\).
Thus, if \(\width_{\mathcal{V}}(M, g) = \infty\) for some Riemannian metric \(g\), we say that \(M\) has \emph{infinite \(\mathcal{V}\)-width}.
More generally:
\begin{Definition}
 Fix some class of bands \(\mathcal{V}\).
 We say that a manifold \(X\) has \emph{infinite \(\mathcal{V}\)-width} if \(\width_{\mathcal{V}}(X, g) = \infty\) for every complete Riemannian metric \(g\) on \(X\).
\end{Definition}

As has already been proved in \cite{Zeidler:band-width-estimates}, infinite \(\mathcal{KO}\)-width is an obstruction to positive scalar curvature.
We reprove this result with the optimal constant using Theorem~\ref{thm:quantitative-codim-1}.

\begin{Proposition} \label{cor:KOBandEstimate}
 Let \((V,g)\) be a Riemannian \(\mathcal{KO}\)-band such that the scalar curvature of \(g\) is bounded below by \(n(n-1)\).
 Then
 \[
 \width(V,g) < \frac{2 \pi}{n}.
 \]
 \end{Proposition}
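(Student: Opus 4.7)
The strategy is to argue by contradiction by extending the compact band $V$ to a complete spin manifold to which Theorem~\ref{thm:quantitative-codim-1} applies. Suppose toward contradiction that $d_0 := \width(V, g) \geq 2\pi/n$. First I construct $W$ by attaching half-infinite product cylinders $\partial_- V \times (-\infty, -d_0/2]$ and $\partial_+ V \times [d_0/2, \infty)$ to $V$ along its boundary, carrying the product metrics induced by $g|_{\partial_\pm V}$; after a standard smoothing of the metric near the gluing interfaces, this yields a complete Riemannian spin manifold. Pulling back the flat bundles $E|_{\partial_\pm V}$ along the cylinder projections extends $E$ to a flat bundle $\tilde E \to W$ of finitely generated projective Hilbert-$A$-modules, and by compactness of $V$ and $\partial V$ the number $C := \inf_{p \in W} \scal(p)$ is a finite real number.

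Next I construct a smooth, proper, 1-Lipschitz function $x \colon W \to \R$ satisfying $x \leq -d_0/2$ on the left cylinder and $x \geq d_0/2$ on the right cylinder. A natural starting point on $V$ is the Lipschitz function $p \mapsto -d_0/2 + \min(\dist_g(p, \partial_- V),\, d_0)$, which takes the value $-d_0/2$ on $\partial_- V$, equals $d_0/2$ on $\partial_+ V$ (using $\dist_g(\partial_- V, \partial_+ V) = d_0$), and is $1$-Lipschitz. On each cylinder take $x$ to be the cylinder coordinate, and smooth everything across the gluing interfaces while preserving the $1$-Lipschitz bound.

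Now pick a regular value $a$ of $x$ in the interior of the left cylinder, so that $M := x^{-1}(a)$ is a copy of $\partial_- V$ and $\tilde E|_M$ is the pullback of $E|_{\partial_- V}$ along the product cylinder. Consequently $\ind(\Dirac_{M, \tilde E|_M}) = \ind(\Dirac_{\partial_- V, E|_{\partial_- V}}) \neq 0$ by the $\mathcal{KO}$-band hypothesis. For any $d$ with $d < 2\pi/n \leq d_0$ the interval $I := [-d/2, d/2]$ satisfies $x^{-1}(I) \subseteq V$ (since $|x|$ exits this range on both cylinders), so the scalar curvature of $W$ is bounded below by $n(n-1)$ on $x^{-1}(I)$. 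Theorem~\ref{thm:quantitative-codim-1} yields
\[
 C \leq -n(n-1) \tan\left(\frac{nd}{4}\right)^{2}.
\]
Letting $d \nearrow 2\pi/n$, the right-hand side tends to $-\infty$, contradicting $C \in \R$, and the conclusion $\width(V, g) < 2\pi/n$ follows.

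The main technical delicacy is producing $x$ that is smooth \emph{and} exactly $1$-Lipschitz: a crude convolutional smoothing would only yield a $(1+\epsilon)$-Lipschitz function and would force us to replace $2\pi/n$ by $(1+\epsilon)\cdot 2\pi/n$ in the final step, losing the strict inequality. This is handled by a careful Riemannian smoothing of the distance function that preserves its Lipschitz constant; otherwise the argument is essentially a direct reduction of the $\mathcal{KO}$-band situation to the Callias-type setup of the preceding section.
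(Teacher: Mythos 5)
Your proof follows essentially the same approach as the paper: attach infinite cylinders to the boundary, extend to a complete metric agreeing with $g$ on $V$, pick a proper $1$-Lipschitz function $x$ transverse to $\partial_- V$, apply Theorem~\ref{thm:quantitative-codim-1} to intervals of length $d \nearrow 2\pi/n$, and derive a contradiction from $\inf \scal = -\infty$. The only cosmetic difference is that the paper simply takes $x$ to be the signed distance function to $\partial_- V$ (which already satisfies $x^{-1}\big((0, d_0)\big)\subseteq V^\circ$ and is a $1$-Lipschitz cylinder coordinate at infinity), whereas you build $x$ by truncating the distance function on $V$ and gluing in cylinder coordinates; both serve the same purpose, and in both cases a smoothing step is needed.

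One remark on your final paragraph: the worry about needing an \emph{exact} $1$-Lipschitz smoothing to keep the strict inequality is overcautious. It suffices to produce, for every $\epsilon>0$, a smooth proper $(1+\epsilon)$-Lipschitz approximation $\tilde x_\epsilon$ uniformly close to $x$, and to rescale $x_\epsilon := \tilde x_\epsilon/(1+\epsilon)$. This is $1$-Lipschitz, and for each $\epsilon$ one can still place an interval $I_\epsilon$ of length $d_\epsilon$ with $d_\epsilon \to 2\pi/n$ as $\epsilon\to 0$ inside the region where $\scal \geq n(n-1)$, because the $\epsilon$-dependent rescaling and $C^0$-perturbation both go to zero. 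Since the quantity $\inf_{p\in W}\scal(p)$ does not depend on $\epsilon$, applying Theorem~\ref{thm:quantitative-codim-1} for each $\epsilon$ and letting $\epsilon\to 0$ still forces $\inf\scal = -\infty$; no exactly $1$-Lipschitz smoothing is required, and the strict inequality $\width(V,g)<2\pi/n$ is not lost. This is what the paper implicitly refers to when citing the smoothing arguments of \cite{Cecchini:LongNeck,Zeidler:band-width-estimates}.
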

 \begin{proof}
 Suppose, by contradiction, that \(\width(V,g) \geq \frac{2\pi}{n}\).
 We then attach infinite cylinders to each distinguished boundary part of \(V\) as in the proof of \cite[Theorem~3.3]{Zeidler:band-width-estimates}.
 This yields a~manifold
 \[W = (\partial_- V \times (-\infty, 0]) \cup_{\partial_- V} V \cup_{\partial_+ V} (\partial_+ V \times [0,\infty)).\]
 We endow \(W\) with a complete Riemannian metric of bounded geometry which agrees with~\(g\) on~\(V\).
 Let \(x \colon W \to \R\) be the signed distance function to \(\partial_- V\).

 Assume for simplicity that \(x\) is smooth.
 Then this fits into the setup of Theorem~\ref{thm:quantitative-codim-1} and satisfies \(\scal(p) \geq n(n-1)\) for all \(p \in x^{-1}(I)\) for an interval \(I\) of length \(\geq 2 \pi / n\).
 Now Theorem~\ref{thm:quantitative-codim-1} implies that
 \[
 \inf_{p \in W} \scal(p) \leq - n(n-1) \tan\left(\frac{nd}{4}\right)^2
 \]
 for every \(d < 2 \pi /n\).
 Consequently, \(\inf\limits_{p \in W} \scal(p) = -\infty\), a contradiction to the fact that the metric on \(W\) has bounded geometry.

 If \(x\) is not smooth, then the same argument can be applied to suitable smooth approximations of \(x\) (as is done in similar situations in \cite{Cecchini:LongNeck,Zeidler:band-width-estimates}) to obtain the same contradictory conclusion \(\inf\limits_{p \in W} \scal(p) = -\infty\).
 \end{proof}

\begin{Corollary}\label{cor:KO-width-obstr}
 Let \(X\) be a manifold of infinite \(\mathcal{KO}\)-width.
 Then \(X\) does not admit a complete metric of uniformly positive scalar curvature.
\end{Corollary}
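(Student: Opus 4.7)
The proof is a short rescaling argument that reduces Corollary~\ref{cor:KO-width-obstr} directly to Proposition~\ref{cor:KOBandEstimate}. Assume for contradiction that $X$ admits a complete metric $g$ with $\scal_g \geq c > 0$ for some constant $c$. Let $n = \dim X$. Rescale by setting $\tilde{g} = \frac{c}{n(n-1)} g$. Then $\tilde{g}$ is still a complete metric on $X$, and since scalar curvature transforms as $\scal_{\lambda^2 g} = \lambda^{-2}\scal_g$, we have $\scal_{\tilde{g}} \geq n(n-1)$ everywhere on $X$.

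By hypothesis, $X$ has infinite $\mathcal{KO}$-width, which by definition means $\width_{\mathcal{KO}}(X, \tilde{g}) = \infty$. Hence there exist a $\mathcal{KO}$-band $V$ and a local diffeomorphism $\phi \colon V \to X$ with $\width(V, \phi^\ast \tilde{g}) > 2\pi/n$. Since $\phi$ is a local isometry from $(V, \phi^\ast \tilde{g})$ to $(X, \tilde{g})$, the pulled-back metric satisfies $\scal_{\phi^\ast \tilde{g}} \geq n(n-1)$ pointwise. Moreover $\dim V = \dim X = n$.

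Applying Proposition~\ref{cor:KOBandEstimate} to the Riemannian $\mathcal{KO}$-band $(V, \phi^\ast \tilde{g})$ yields $\width(V, \phi^\ast \tilde{g}) < 2\pi/n$, contradicting the previous inequality. So $X$ admits no complete metric of uniformly positive scalar curvature.

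There is no real obstacle here; the only point that requires a moment of care is the normalization step, ensuring that the rescaled metric $\tilde{g}$ remains complete (which it does because completeness is preserved under multiplication of the metric by a positive constant) and that local diffeomorphisms into $(X, \tilde{g})$ transfer the scalar curvature bound pointwise. After that, the result is an immediate consequence of Proposition~\ref{cor:KOBandEstimate} together with the definition of infinite $\mathcal{KO}$-width.
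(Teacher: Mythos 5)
Your proof is correct and follows exactly the argument the paper leaves implicit: rescale the putative uniformly positive scalar curvature metric so that $\scal \geq n(n-1)$, invoke infinite $\mathcal{KO}$-width to produce a $\mathcal{KO}$-band of width exceeding $2\pi/n$, and contradict Proposition~\ref{cor:KOBandEstimate}. The normalization check and the observation that local diffeomorphisms preserve the pointwise scalar curvature bound are the right details to verify, and you have done so.
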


The following families of examples have infinite \(\mathcal{KO}\)-width:
\begin{Proposition}\label{prop:boundary-obstr}
 Let \(W\) be a manifold with boundary such that \(\partial W\) is spin and \(\alpha(\partial W) \neq 0 \in \KO_\ast(\Cstar \pi_1 \partial W)\).
 Then the interior \(X = W^\circ\) has infinite \(\mathcal{KO}\)-width.
 In particular, \(X\) does not admit a complete metric of uniformly positive scalar curvature.
\end{Proposition}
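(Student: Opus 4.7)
The plan is to produce, for any complete Riemannian metric $g$ on $X = W^\circ$ and any prescribed $R > 0$, a $\mathcal{KO}$-band $V$ together with a local diffeomorphism $\phi \colon V \to X$ whose pullback-metric width is at least $R$. Taking the supremum over $R$ then yields $\width_{\mathcal{KO}}(X, g) = \infty$, and the ``in particular'' statement is an immediate consequence of Corollary~\ref{cor:KO-width-obstr}.

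First I would fix a collar embedding $c \colon \partial W \times [0,1) \hookrightarrow W$, so that $c(\partial W \times (0,1)) \subset X$. For each $0 < s < 1/2$ define the band $V_s \coloneqq \partial W \times [s, 1/2]$, with $\partial_- V_s \coloneqq \partial W \times \{s\}$ and $\partial_+ V_s \coloneqq \partial W \times \{1/2\}$, and let $\phi_s \colon V_s \to X$ be the restriction of $c$, which is an embedding and hence a local diffeomorphism. The manifold $V_s$ is spin because $\partial W$ is. Let $\pi \colon V_s \to \partial W$ be the projection and set $E \coloneqq \pi^\ast \mathcal{L}_{\partial W}$; this is a flat bundle of finitely generated projective Hilbert-$\Cstar \pi_1 \partial W$-modules over $V_s$ whose restriction to $\partial_- V_s$ is canonically identified with the \Mishchenko\ bundle of $\partial W$. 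Consequently the index of the twisted Dirac operator on $\partial_- V_s$ equals the Rosenberg index $\alpha(\partial W) \neq 0 \in \KO_{n-1}(\Cstar \pi_1 \partial W)$, and $V_s$ is a $\mathcal{KO}$-band.

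The main step is then to show that $\width(V_s, \phi_s^\ast g) \to \infty$ as $s \to 0^+$. Writing $S \coloneqq c(\partial W \times \{1/2\})$ and $d(s) \coloneqq \dist_g\bigl(c(\partial W \times \{s\}), S\bigr)$, any path in $V_s$ is carried by $\phi_s$ to a path in $X$ of the same length, so $\width(V_s, \phi_s^\ast g) \geq d(s)$. It therefore suffices to prove $d(s) \to \infty$. If this failed, there would exist $C > 0$ and $s_n \to 0^+$ together with points $p_n \in c(\partial W \times \{s_n\})$ at $g$-distance at most $C$ from $S$. By Hopf--Rinow the closed $C$-neighborhood of the compact set $S$ in the complete manifold $(X, g)$ is compact, so a subsequence of $(p_n)$ would converge to a point of $X$. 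But the same sequence, viewed in $W$, accumulates on $\partial W$ (because $s_n \to 0$ and $\partial W$ is compact), contradicting $\partial W \cap X = \emptyset$.

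The technical heart of the argument is this last Hopf--Rinow step: it is where completeness of $g$ and the fact that $\partial W$ lies ``at infinity'' in $X$ combine to force the collar slices $c(\partial W \times \{s\})$ to recede metrically as $s \to 0$. Everything else -- recognising $V_s$ as a $\mathcal{KO}$-band via the pulled-back \Mishchenko\ bundle, and bounding pullback-metric width from below by the ambient distance -- is a routine unwinding of definitions.
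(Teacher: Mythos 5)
Your proof is correct and follows essentially the same route as the paper: restrict a collar neighborhood of $\partial W$ to the slices $\partial W \times [s, 1/2]$, observe that these are $\mathcal{KO}$-bands via the pulled-back \Mishchenko\ bundle, and argue that completeness of $g$ forces the collar slices to recede to infinity as $s \to 0$. You fill in the details the paper leaves implicit (the explicit $\mathcal{KO}$-band structure and the Hopf--Rinow argument for $d(s) \to \infty$, which uses that $\partial W$ is closed), and both steps are sound.
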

\begin{proof}
 Let \(\phi \colon \partial W \times [0,2) \hookrightarrow W\) be a collar neighborhood such that \(\phi(\partial W \times \{0\}) = \partial W\).
 For each \(\varepsilon >0\), we have that \(\partial W \times [\varepsilon, 1]\) is a \(\mathcal{KO}\)-band, and
 \[
 \lim_{\varepsilon \to 0} \width(\partial W \times [\varepsilon, 1], \phi^\ast g) = \infty
 \]
 for any fixed complete metric \(g\) on \(X = W^\circ\).
 This proves that \(X\) has infinite \(\mathcal{KO}\)-width.
\end{proof}
The final conclusion of this theorem can also be obtained through \emph{coarse index theory} (see Section~\ref{subsec:coarse}) and was previously known.
Moreover, the arguments given in~\cite[Theorem~3.2]{ChangWeinbergerYu:Taming3} imply:
\begin{Corollary}
 In any dimension \(n \neq 3\), there exist contractible manifolds of infinite \(\mathcal{KO}\)-width.
\end{Corollary}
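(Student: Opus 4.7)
The plan is to apply Proposition~\ref{prop:boundary-obstr}: it suffices to produce, for each \(n \neq 3\), a compact contractible \(n\)-manifold \(W\) whose boundary \(\partial W\) is a closed spin manifold with non-vanishing Rosenberg index. Given such \(W\), the interior \(X = W^\circ\) is automatically a contractible \(n\)-manifold of infinite \(\mathcal{KO}\)-width.

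For \(n = 2\), the unit disk \(W = D^2\) works directly: the boundary \(\Sphere^1\), equipped with its non-bounding spin structure, has \(\alpha(\Sphere^1) \neq 0 \in \KO_1(\Cstar \Z)\), so that \(\R^2 \cong (D^2)^\circ\) has infinite \(\mathcal{KO}\)-width.

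For \(n \geq 4\), I would follow~\cite{ChangWeinbergerYu:Taming3}. Two ingredients are required. First, one needs a closed spin homology \((n-1)\)-sphere \(M\) with \(\alpha(M) \neq 0\); the standard strategy is to start with a closed spin manifold of non-vanishing Rosenberg index (for example an aspherical manifold such as \(\Torus^{n-1}\)) and to perform spin-surgeries below the middle dimension so as to reduce the homology to that of a sphere while preserving non-vanishing of the Rosenberg index via its spin-bordism invariance over \(B\pi_1\). Second, one needs a compact contractible \(n\)-manifold \(W\) with \(\partial W = M\); this is provided by the \(h\)-cobordism theorem for \(n \geq 6\), by direct surgery arguments for \(n = 5\), and by Freedman's topological theory for \(n = 4\). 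The excluded case \(n = 3\) is genuine: a compact contractible \(3\)-manifold has boundary \(\Sphere^2\), and \(\alpha(\Sphere^2) = 0\) because \(\Sphere^2\) admits positive scalar curvature.

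The main obstacle lies in the first ingredient. A homology sphere has perfect fundamental group, so aggressive surgery either destroys the homology-sphere property or trivializes \(\pi_1\) entirely, in which case \(\alpha(M)\) reduces to Hitchin's \(\alpha\)-invariant---which for a simply connected homology sphere vanishes identically in dimensions where \(\Ahat\) is the only obstruction. One must therefore retain just enough fundamental-group structure (for instance via a non-trivial perfect quotient) that the image of \([M]_{\KO}\) under the classifying map remains non-zero in \(\KO_{n-1}(\Cstar \pi_1 M)\). This delicate balancing act is precisely what the arguments of~\cite{ChangWeinbergerYu:Taming3} accomplish.
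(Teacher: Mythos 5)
Your overall strategy---apply Proposition~\ref{prop:boundary-obstr} to a compact contractible \(W^n\) whose boundary is a closed spin homology \((n-1)\)-sphere with \(\alpha(\partial W)\neq 0\), with the contractible filling supplied by Kervaire for \(n\geq 5\) and by Freedman for \(n=4\)---is indeed how the paper reads this statement out of \cite[Theorem~3.2]{ChangWeinbergerYu:Taming3}, and your low-dimensional example \(W=D^2\), \(\partial W=\Sphere^1\) with the Lie-group spin structure, is a correct supplement covering a case the citation leaves implicit.

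The gap is in your account of the "first ingredient." One cannot start from \(\Torus^{n-1}\) and surger down to a homology sphere ``while preserving \(\alpha\) via spin-bordism invariance over \(B\pi_1\).'' A homology sphere has perfect fundamental group, and \(\Z^{n-1}\) has no non-trivial perfect quotient (any quotient is abelian), so killing \(H_1\) necessarily changes \(\pi_1\)---and with it the receptacle \(\KO_{n-1}(\Cstar\pi_1)\) in which the Rosenberg index lives, so bordism invariance over a fixed \(B\pi_1\) gives no control once the surgeries touch the fundamental group. You flag exactly this tension, but then attribute its resolution to \cite{ChangWeinbergerYu:Taming3} as a ``delicate balancing act,'' which misrepresents the argument: the construction there sidesteps the dilemma rather than balancing it. The homology spheres one uses are \emph{aspherical}: in higher dimensions these are supplied by Davis' reflection-group trick, and for \(n=4\) by aspherical (e.g., Brieskorn) homology \(3\)-spheres, with Quinn's smoothability of open \(4\)-manifolds bridging Freedman's topological contractible filling to a smooth interior. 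Their fundamental groups satisfy the strong Novikov conjecture, and for a closed aspherical spin \(\Sigma^{n-1}\) the class \(\ph(c_\ast[\Sigma]_{\KO})\in H_\ast(B\pi_1\Sigma;\Q)\) has non-zero top-degree component, so rational injectivity of the assembly map already forces \(\alpha(\Sigma)\neq 0\). Replacing your surgery mechanism by this asphericity argument is what actually closes the proof.
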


We mention two further examples.

\begin{Example}[{\cite[Example~4.8]{Zeidler:band-width-estimates}}] \label{ex:codim-1-obstr}
 Let \(X\) be a spin manifold and \(M \subset X\) be a codimension one submanifold with trivial normal bundle such that the induced map \(\pi_1 M \to \pi_1 X\) is injective.
 Suppose that \(\alpha(M) \neq 0 \in \KO_\ast(\Cstar \pi_1 M)\).
 Then \(X\) has infinite \(\mathcal{KO}\)-width.
\end{Example}

In the situation of Example~\ref{ex:codim-1-obstr}, applying Theorem~\ref{thm:quantitative-codim1-obstr} even shows the stronger conclusion that \(X\) does not admit any complete metric of (not necessarily uniform) positive scalar curvature.
Such a situation was also studied in \cite{NitscheSchickZeidler:Transfer,Zeidler:IndexObstructionPositive}, where it was shown that if \(X\) is closed, then it has non-vanishing Rosenberg index.

\begin{Example}[{\cite[Example~4.9]{Zeidler:band-width-estimates}}]\label{ex:codim-2-obstr}
 Let \(X\) be a spin manifold which admits a codimension two submanifold \(N \subseteq X\) with trivial normal bundle such that the induced maps \(\pi_1 N \hookrightarrow \pi_1 X\) and \(\pi_2 N \twoheadrightarrow \pi_2 X\) are injective and surjective, respectively.
 Suppose that \(\alpha(N) \neq 0 \in \KO_\ast(\Cstar \pi_1 N)\).
 Then \(X\) has infinite \(\mathcal{KO}\)-width.
\end{Example}

The situation of Example~\ref{ex:codim-2-obstr} was previously studied by Hanke, Pape, and Schick~\cite{HankePapeSchick:CodimensionTwoIndex}.
Moreover, Kubota~\cite{kubotaRelativeMishchenkoFomenko2019,KubotaSchick:Codim2} showed that if \(X\) is closed, then it has non-vanishing Rosenberg index.

\section[Ahat-Largeness implies infinite KO-width]{\(\boldsymbol{\Ahat}\)-Largeness implies infinite \(\boldsymbol{\mathcal{KO}}\)-width} \label{A-hat-largeness}

In this section, we will observe that \(\Ahat\)-iso-enlargeability implies infinite \(\Ahat\)-overtorical width and thereby prove Theorem~\ref{thm:A-hat-iso-enlargeable}.

The proof is based on analogous ideas as in \cite[Section~3]{Gromov:MetricInequalitiesScalar}:
Observe that each sphere \(\Sphere^{k}\) for \(k \geq 1\) admits an embedding of the \((k-1)\)-torus \(\Torus^{k-1}\).
Extending this to a tubular neighborhood shows that there exists an embedded torical band \(\Torus^{k-1} \times [-1,1] \cong V_k \hookrightarrow \Sphere^{k}\) of some width \(d_k > 0\).\footnote{Some ideas to determine the best possible such \(d_k\) are discussed in \cite[Section~3]{Gromov:MetricInequalitiesScalar} but this is not relevant for the purpose here.}
In the case \(k = 1\), we take \(\Torus^{0}\) to be a point and \(V_1\) is just an interval embedded in the circle.
We will assume that \(V_k\) is chosen such that it does not meet the base-point \(\ast \in \Sphere^k\).
Then we set \(\delta_n = \min\limits_{1 \leq k \leq n} d_k\).
\begin{Theorem}\label{thm:A-hat-iso-enlargeable-technical}
 Let \(M\) be a closed manifold which is \(\Ahat\)-iso-enlargeable.
 Then \(\Ahat(M) \neq 0\) or \(M\) has infinite \(\hat{\mathcal{T}}_{\Ahat}\)-width.

 In particular, if \(M\) is spin, then \(\Ahat(M) \neq 0\) or \(M\) has infinite \(\mathcal{KO}\)-width, and thus \(M\) does not admit a metric of positive scalar curvature.
\end{Theorem}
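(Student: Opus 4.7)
The plan is to split according to the dimension $k$ in which $M$ is $\Ahat$-iso-enlargeable. If $k = 0$, then for any $\varepsilon > 0$ the hypothesis produces a Riemannian local isometry $\bar{M} \to M$ with $\bar{M}$ closed and $\Ahat(\bar{M}) \neq 0$, by the remark following the definition of $\varepsilon$-$\Ahat$-hyperspherical. A local isometry from a connected closed manifold to a connected manifold of the same dimension is a finite covering, under which the $\Ahat$-genus is multiplicative, so $\Ahat(M) \neq 0$ and the theorem holds.

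If $k \geq 1$, I will show $M$ has infinite $\hat{\mathcal{T}}_{\Ahat}$-width by producing, for each $\varepsilon > 0$, an $\Ahat$-overtorical band $W$ and a local diffeomorphism $\phi \colon W \to M$ with $\width(W, \phi^\ast g) \geq \delta_n / \varepsilon$. Fix $\varepsilon > 0$ and let $p \colon \bar{M} \to M$ be a local isometry together with an $\varepsilon$-Lipschitz map $f \colon \bar{M} \to \Sphere^k$ that is constant at the base-point $\ast$ on $\partial \bar{M}$ and outside a compact set, and has $\AhatDeg(f) \neq 0$. Let $V_k \cong \Torus^{k-1} \times [-1,1] \hookrightarrow \Sphere^k$ be the embedded torical band of width $\geq \delta_n$ chosen to avoid $\ast$. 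After a $C^\infty$-small perturbation of $f$ (absorbed by starting from a slightly smaller $\varepsilon$), I may assume $f$ is transverse to $\Torus^{k-1} \times \{\pm 1\}$. Set
\[
 W \coloneqq f^{-1}(V_k), \qquad \partial_\pm W \coloneqq f^{-1}\bigl(\Torus^{k-1} \times \{\pm 1\}\bigr).
\]
Since $\ast \notin V_k$, the set $W$ is compact and disjoint from $\partial \bar{M}$, so it is a band with the indicated boundary decomposition. The restriction $f|_W \colon W \to V_k \cong \Torus^{k-1} \times [-1,1]$ is a band map, and its $\Ahat$-degree, computed as $\Ahat(f^{-1}(y))$ at a regular value $y$ in the interior of $V_k$, equals $\AhatDeg(f) \neq 0$ because $f^{-1}(y) \subseteq W$. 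Thus $W \in \hat{\mathcal{T}}_{\Ahat}$. Any path in $W$ joining $\partial_- W$ to $\partial_+ W$ is sent by $f$ to a path in $V_k$ joining its torical boundary components, hence of length at least $\delta_n$; the $\varepsilon$-Lipschitz condition gives $\width(W) \geq \delta_n/\varepsilon$. Taking $\phi \coloneqq p|_W$, which is a local diffeomorphism with $\phi^\ast g$ equal to the restriction of the metric on $\bar{M}$, this shows $\width_{\hat{\mathcal{T}}_{\Ahat}}(M, g) \geq \delta_n/\varepsilon$, which blows up as $\varepsilon \to 0$.

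For the ``in particular'' clause, assume $M$ is spin. The case $\Ahat(M) \neq 0$ is Lichnerowicz's theorem; otherwise, infinite $\hat{\mathcal{T}}_{\Ahat}$-width implies infinite $\mathcal{KO}$-width by Corollary~\ref{cor:KO-width-bound-Ahat}, which rules out positive scalar curvature on the closed manifold $M$ by Corollary~\ref{cor:KO-width-obstr} (completeness and uniform positivity are automatic by compactness). The main technical subtlety I anticipate is verifying $\AhatDeg(f|_W) \neq 0$: this rests on the transversal preimage interpretation of the $\Ahat$-degree together with $\ast \notin V_k$, which guarantees that no regular preimages are lost upon restricting from $\bar{M}$ to $W$.
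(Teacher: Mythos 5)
Your proof is correct and takes essentially the same route as the paper's: split on the iso-enlargeability dimension $k$, and for $k \geq 1$ pull back the embedded torical band $V_k \subset \Sphere^k$ along the $\varepsilon$-Lipschitz map to produce an $\Ahat$-overtorical band of width $\geq \delta_n/\varepsilon$, then conclude via Corollaries~\ref{cor:KO-width-bound-Ahat} and~\ref{cor:KO-width-obstr}. You supply a bit more detail than the paper (the finite-covering argument for $k=0$, and the careful check that $\AhatDeg(f|_W) = \AhatDeg(f)$ because $\ast \notin V_k$), but the underlying argument is the same.
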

\begin{proof}
 If \(M\) is \(\Ahat\)-iso-enlargeable in dimension \(k = 0\), then this just means that \(\Ahat(M) \neq 0\).

 Otherwise, for each \(\varepsilon > 0\), there exists a local isometry \(\bar{X} \to X\) for some Riemannian mani\-fold~\(\bar{X}\) and a smooth \(\varepsilon\)-Lipschitz map \(f \colon \bar{X} \to \Sphere^k\) for some \(1 \leq k \leq n\) which is constant at infinity and has non-zero \(\Ahat\)-degree.

 Let \(V_k \hookrightarrow \Sphere^k\) be the embedded torical band discussed above.
 We may assume that \(f\) is transversal to the boundary of \(V_k\) and set \(\hat{V} \coloneqq f^{-1}(V_k)\).
 Then \(\hat{V}\) is a band and \(\hat{V} \in \hat{\mathcal{T}}_{\Ahat}\) because \(V_k \cong \Torus^{k-1} \times [-1,1]\) and \(f\) has non-trivial \(\Ahat\)-degree.
 Moreover, \(\width(V, g) \geq \delta_n / \varepsilon\) because \(V_k\) has width \(\geq \delta_n\) and \(f\) is \(\varepsilon\)-Lipschitz.
 As \(\varepsilon > 0\) can be arbitrarily small, this shows that \(M\) has infinite \(\hat{\mathcal{T}}_{\Ahat}\)-width.

 If \(M\) is spin, then, in the first case, \(\Ahat(M) \neq 0\) is the classical Lichnerowicz obstruction to positive scalar curvature.
 In the other case, Corollary~\ref{cor:KO-width-bound-Ahat} implies that \(M\) has infinite \(\mathcal{KO}\)-width, and so \(M\) does not admit a metric of positive scalar curvature by Corollary~\ref{cor:KO-width-obstr}.
\end{proof}

\begin{Remark}
 Conversely, the approach from \cite[Section~4]{Gromov:MetricInequalitiesScalar} can be used to show that infinite \(\hat{\mathcal{T}}_{\Ahat}\)-width implies \(\Ahat\)-iso-enlargeability in some dimension \(k \geq 1\).
\end{Remark}

\section{Generaliziations and open problems} \label{sec:Problems}
In this section, we discuss a number of possible generalizations and open problems which are suggested by the results discussed in this note.
\subsection{Infinite scalar curvature decay via hypersurface}
Theorem~\ref{thm:quantitative-codim-1} suggests an estimate like \eqref{eq:negativity} could hold whenever the closed hypersurface \(M\) does not admit a metric of positive scalar curvature.
In particular, we may propose the following question:

\begin{Conjecture}
 There exist functions \(s_n \colon [0, 2 \pi/n) \to [0,\infty)\), where \(s_n(x) \to \infty\) as \(x \to 2\pi /n\), such that the following holds.

 Let \(M\) be a closed manifold of dimension \(n-1 \neq 4\) which does not admit a metric of positive scalar curvature.
 Suppose that \(g\) is a complete Riemannian metric on \(M \times \R\) whose scalar curvature is bounded below by \(n(n-1)\) on a neighborhood \(U\) of \(M \times \{0\}\) of some width \(d < 2\pi/n\) $($more precisely, that is, \(U \coloneqq \{p \in M \times \R \mid d_g(p, M \times \{0\}) \leq d/2\})\). Then
 \[
 \inf_{p \in M \times \R} \scal_g(p) \leq - s_n(d).
 \]
\end{Conjecture}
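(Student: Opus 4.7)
The plan is to extract the estimate from the Callias-type machinery of Section~\ref{sec:codim-1-obstr}. Take $W = M \times \R$ (no nontrivial covering is needed, since $\pi_1(M \times \R) = \pi_1 M$), the \Mishchenko\ bundle $E = \mathcal{L}_W$ of~$\pi_1 M$, and a smooth $1$-Lipschitz approximation~$x$ of the signed distance from~$M \times \{0\}$. Whenever $\alpha(M) \neq 0 \in \KO_{n-1}(\Cstar \pi_1 M)$, Theorem~\ref{thm:quantitative-codim-1} applies verbatim and proves the conjecture with
\[
s_n(d) = n(n-1) \tan(nd/4)^2,
\]
which indeed diverges as $d \to 2\pi/n$. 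Thus the substantive content of the conjecture lies entirely in the case when $M$ has vanishing Rosenberg index yet admits no PSC metric --- precisely the regime in which the \emph{unstable} Gromov--Lawson--Rosenberg conjecture fails.

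To handle this remaining case one would aim to couple the Dirac operator to a more refined coefficient bundle and re-run the Callias argument. Several natural substitutes for~$\mathcal{L}_W$ present themselves: flat Hilbert bundles associated to unitary representations of~$\pi_1 M$ into auxiliary \textCstar-algebras finer than $\Cstar \pi_1 M$; secondary invariants arising from Stolz's positive scalar curvature bordism sequence; or iterated codimension-one descent in the spirit of Example~\ref{ex:codim-1-obstr}, reducing the problem to a lower-dimensional hypersurface whose Rosenberg index is nonzero. Each such choice slots into the framework of Theorem~\ref{thm:quantitative-codim-1} and should yield an estimate of the qualitative form~\eqref{eq:negativity}, with constants depending only on~$n$. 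A complementary route is the minimal hypersurface method of Schoen--Yau, as extended to the enlargeable setting by Cecchini--Schick~\cite{CecchiniSchick:Enlargeable}, which is also in principle capable of producing quantitative scalar curvature inequalities in this setting.

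The main obstacle is that the full conjecture essentially subsumes Schick's meta-conjecture: one would need, for every closed spin manifold~$M$ admitting no PSC metric, \emph{some} index-theoretic invariant that both detects this non-existence and is compatible with the pointwise Callias-type estimate~\eqref{eq:callias-estimate}. No such universal invariant is presently known. A realistic intermediate goal is therefore to establish the conjecture under auxiliary hypotheses --- for instance, assuming Baum--Connes injectivity for $\pi_1 M$ together with the unstable GLR conjecture in the relevant dimension, so that $\alpha(M) \neq 0$ and the reduction to Theorem~\ref{thm:quantitative-codim-1} is immediate --- and then to investigate, family by family, specific classes of manifolds (e.g., those carrying higher rho- or relative index invariants) for which refined coefficient bundles supply the required quantitative obstruction.
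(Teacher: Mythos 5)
The statement you are addressing is labeled a \emph{Conjecture} in the paper (Section~\ref{sec:Problems}); it is posed as an open problem, and the paper supplies no proof of it. Your ``proposal'' correctly recognizes this: you do not claim a proof, but instead isolate the solvable case and sketch routes to the rest. That assessment is consistent with the paper, which notes just after the conjecture that it would imply Conjectures~\ref{conj:band-width} and~\ref{conj:no-complete-psc}, and which proves (Theorem~\ref{thm:quantitative-codim1-obstr} via Theorem~\ref{thm:quantitative-codim-1}) exactly the special case you identify: $M$ closed spin with $\alpha(M) \neq 0$, with the explicit function $s_n(d) = n(n-1)\tan(nd/4)^2$. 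So for that sub-case your reduction is correct and matches the paper.

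Two points deserve sharpening. First, your claim that ``the substantive content of the conjecture lies entirely in the case when $M$ has vanishing Rosenberg index'' is slightly misleading: the conjecture as stated imposes no spin hypothesis, so there is a further regime --- $M$ non-spin (indeed not even almost spin) --- in which the Rosenberg index is not even defined and the entire Callias-type framework is unavailable from the start. The paper flags this in Section~6.2 only for the almost-spin generalization; the genuinely non-spin case would require something like the Schoen--Yau hypersurface method, which you mention, but which is not known to yield quantitative estimates of the conjectured form with a uniform $s_n$ for all such $M$. Second, the auxiliary-hypotheses route you propose (Baum--Connes injectivity plus the unstable GLR conjecture) is circular as phrased, since the unstable GLR conjecture is known to be \emph{false} in general by Schick's counterexample \cite{Schick:Counterexample}; what one could assume is the \emph{stable} GLR conjecture (which holds under Baum--Connes injectivity by Theorem~\ref{thm:Stolz-Stable-GLR}), but then one only gets $\alpha(M \times B^k) \neq 0$ for some $k$, and transporting the Callias estimate from $M \times B^k \times \R$ back to $M \times \R$ with a controlled $s_n$ is not automatic and would need an additional argument. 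In short: your identification of the tractable case and the core obstruction is correct and agrees with the paper, but the remaining gap is wider than your write-up suggests, and the specific fallback hypotheses you propose need to be adjusted.
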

Note that this would imply both Conjectures~\ref{conj:band-width} and~\ref{conj:no-complete-psc}.

\subsection{Almost spin manifolds}
While the Dirac operator method relies on manifolds being spin, it is usually enough if the manifold admits a connected covering that is spin.
We refer to such manifolds as \emph{almost spin}.
For simplicity, we have not stressed this point in this note.
However, we expect that generalizations to almost spin manifolds of our statements, in particular Theorem~\ref{thm:quantitative-codim1-obstr}, can be worked out along the same lines.
In the context of the Rosenberg index, the formal groundwork for this has already been laid by Stolz~\cite{Stolz98Concordance} who introduced a twisted version of the group \textCstar-algebra and the Rosenberg index for almost spin manifolds.

\subsection{Comparison to the coarse index}\label{subsec:coarse}
There is another kind of index theory which has been successfully applied to obtain obstructions to \emph{uniformly} positive scalar curvature on certain non-compact manifolds, namely Roe's \emph{coarse index theory} (for an introduction see, e.g.,~\cite{Roe:IndexTheoryCoarseGeometry}).
Coarse index theory was used by Hanke, Pape,
and Schick~\cite{HankePapeSchick:CodimensionTwoIndex} in their approach to the index-theoretic codimension two obstruction (compare Example~\ref{ex:codim-2-obstr}) and by Chang, Weinberger and Yu~\cite{ChangWeinbergerYu:Taming3} to provide obstructions to complete uniformly positive scalar curvature metrics (compare Proposition~\ref{prop:boundary-obstr}).
For a selection of other related results see for instance~\cite{Chang:CoarseObstrArithmetic,Engel:WrongWay,Engel:Rough,SchickZadeh:MultiPart, Zeidler:IndexObstructionPositive}.

Crucial features of the coarse index are that it provides an obstruction to uniformly positive scalar curvature outside a compact subset, and it admits a way of localizing index information on submanifolds.
In the codimension one case, the latter is facilitated by Roe's \emph{partitioned manifold index theorem} which states that a non-vanishing index of a partitioning hypersurface in a non-compact complete manifold implies non-vanishing of the coarse index.

For instance, this can be used to show that if \(M\) is a spin manifold of non-vanishing Rosenberg index, then \(M \times \R\) does not admit a complete metric which has uniformly positive scalar curvature outside a compact subset.
However, this is \emph{also} a consequence of Theorem~\ref{thm:quantitative-codim1-obstr}.
Indeed, it appears that in all the examples we know, the outright obstructions to complete uniformly positive scalar curvature obtained through the coarse index can be alternatively proved (and strengthened) by Theorem~\ref{thm:quantitative-codim1-obstr}.
On the other hand, coarse index theory can be used to exclude the existence of uniformly positive scalar curvature in a fixed quasi-isometry class of Riemannian metrics (the simplest example being \(\R^n\) for \(n \geq 3\) which admits a complete metric of uniformly positive scalar curvature but not in the same quasi-isometry class as the standard Euclidean metric).

This is a somewhat vague point, but it would be of interest to explore if there are a more concrete relations between coarse index theory and the Callias-type operator methods exhibited here (beyond the fact that they have a partially overlapping range of applications).

\subsection{Implications between notions of largeness}
As mentioned Section~\ref{sec:largeness} in the discussion after~\eqref{eq:largeness-implications}, it is an open problem to understand which of the obvious implications between the largeness properties of hypereuclidean, hyperspherical, enlargeable and iso-enlargeable admit a converse.
In particular, this includes Gromov's question of whether closed iso-enlargeable manifolds are enlargeable.

\subsection[KO-width versus the Rosenberg index]{\(\boldsymbol{\mathcal{KO}}\)-width versus the Rosenberg index}
This is an index-theoretic variant of the previous point.
We have discussed in Section~\ref{sec:KO-width} that infinite \(\mathcal{KO}\)-width is an obstruction to (uniformly) positive scalar curvature.
Since infinite \(\mathcal{KO}\)-width can plausibly be described as an obstruction based on index theory of Dirac operators, we should expect that closed spin manifolds of infinite \(\mathcal{KO}\)-width have non-vanishing Rosenberg index (compare Section~\ref{sec:Rosenberg-index}).

\begin{Conjecture}[{\cite[Conjecture~4.12]{Zeidler:band-width-estimates}}] \label{conj:KO-width}
 Let \(M\) be a closed spin manifold of infinite \(\mathcal{KO}\)-width.
 Then \(\alpha(M) \neq 0 \in \KO_\ast(\Cstar \pi_1 M)\).
\end{Conjecture}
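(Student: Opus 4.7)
The plan is to argue by contraposition via Stolz's stable Gromov--Lawson--Rosenberg theorem. Suppose $\alpha(M) = 0 \in \KO_n(\Cstar \pi_1 M)$. Under the auxiliary hypothesis that the (real) Baum--Connes assembly map for $\Gamma = \pi_1 M$ is injective, Theorem~\ref{thm:Stolz-Stable-GLR} yields some $k \in \N$ such that $M \times B^k$ admits a metric of positive scalar curvature, where $B$ is a Bott manifold. The aim is then to derive a contradiction with the hypothesis that $M$ has infinite $\mathcal{KO}$-width.

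The next step is to verify that infinite $\mathcal{KO}$-width is inherited by $M \times B^k$. Given any $\mathcal{KO}$-band $(V, E)$ with $E$ a flat bundle of finitely generated projective Hilbert-$A$-modules, the product $V \times B$ becomes a band with $\partial_\pm(V \times B) = \partial_\pm V \times B$, carrying the pullback of $E$ along the first projection. This bundle is still flat over the same $A$, and the twisted Dirac operator on the boundary $\partial_- V \times B$ is an external product whose $\KO$-theoretic index equals $\ind(\Dirac_{\partial_- V, E|_{\partial_- V}}) \cdot [B]_{\KO}$; this is nonzero because $\Ahat(B) = 1$ ensures that $[B]_{\KO}$ realizes Bott periodicity. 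Any local diffeomorphism $\phi \colon V \to M$ gives rise to $\phi \times \mathrm{id} \colon V \times B \to M \times B$, and with respect to a product metric on $M \times B$ the pulled-back width equals $\width(V, \phi^\ast g_M)$. Since for a closed manifold the infinitude of $\mathcal{KO}$-width does not depend on the choice of metric, iterating $k$ times shows that $M \times B^k$ also has infinite $\mathcal{KO}$-width.

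Finally, Corollary~\ref{cor:KO-width-obstr} tells us that infinite $\mathcal{KO}$-width obstructs the existence of a complete metric of uniformly positive scalar curvature. But any positive scalar curvature metric on the closed manifold $M \times B^k$ is automatically uniformly positive, yielding the required contradiction, so $\alpha(M) \neq 0$ follows conditionally on Baum--Connes for $\pi_1 M$.

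The hard part is to remove the Baum--Connes injectivity hypothesis. Unconditionally, Conjecture~\ref{conj:KO-width} is essentially an index-theoretic refinement of the stable Gromov--Lawson--Rosenberg conjecture and should be at least as hard. A direct attack would try to extract a nontrivial class in $\KO_n(\Cstar \pi_1 M)$ from a sequence of $\mathcal{KO}$-bands $V_n$ of growing width, for instance by lifting them to the universal cover $\widetilde{M}$, equipping it with the \Mishchenko{} bundle $\mathcal{L}_M$, and running a Callias-type construction as in Section~\ref{sec:codim-1-obstr} with the potential supported on a long lifted band. The fundamental difficulty is that each $V_n$ is only assumed to carry some flat bundle $E_n$ over a potentially unrelated coefficient algebra $A_n$, so bridging the index data of the $E_n$ back to $\Cstar \pi_1 M$ appears to require additional input of comparable strength to Baum--Connes.
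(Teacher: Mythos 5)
The statement you are asked to prove is stated as a \emph{conjecture} in the paper, and the paper does not prove it unconditionally; it only remarks (in the paragraph following the conjecture) that infinite $\mathcal{KO}$-width is stable under Bott products, so that Conjecture~\ref{conj:KO-width} follows from the stable Gromov--Lawson--Rosenberg Conjecture~\ref{conj:stable-glr}, and hence from injectivity of the real Baum--Connes assembly map via Theorem~\ref{thm:Stolz-Stable-GLR}. Your proposal is precisely this conditional argument, written out in more detail: you assume $\alpha(M)=0$, invoke Theorem~\ref{thm:Stolz-Stable-GLR} under the Baum--Connes injectivity hypothesis to get a positive scalar curvature metric on $M\times B^k$, then verify the Bott stability of infinite $\mathcal{KO}$-width by the external product formula (the index of the twisted Dirac operator on $\partial_-V\times B$ is the external product with $[B]_{\KO}$, which implements Bott periodicity since $\Ahat(B)=1$), and derive a contradiction with Corollary~\ref{cor:KO-width-obstr}. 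You correctly and honestly flag that this only gives the conditional statement and that removing the Baum--Connes hypothesis is the actual open problem. So your proposal is correct as far as it goes and takes essentially the same route the paper sketches; the added value is that you spell out the Bott-stability step which the paper only cites.
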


There is further evidence for this conjecture.
Since infinite \(\mathcal{KO}\)-width is stable under products with Bott manifolds (see~\cite[Section~4]{Zeidler:band-width-estimates}), Conjecture~\ref{conj:KO-width} is a consequence of Conjecture~\ref{conj:stable-glr} (and thereby of injectivity of the real Baum--Connes assembly map by Theorem~\ref{thm:Stolz-Stable-GLR}).

Moreover, (\(\Ahat\))-iso-enlargeable spin manifolds have infinite \(\mathcal{KO}\)-width by Theorem~\ref{thm:A-hat-iso-enlargeable-technical}.
In the case of the a priori stronger of notion of (\(\Ahat\))-enlargeability, it is known that this implies non-vanishing of the Rosenberg index~\cite{Hanke-Kotschick-Roe-Schick:Coarse-Top-Enlargeability,HankeSchick:Enlargeable,HankeSchick:EnlargeableInfinite}.
In this light, Conjecture~\ref{conj:KO-width} can be viewed as an index-theoretic shadow of the question of whether all iso-enlargeable manifolds are enlargeable.

We conclude with a diagram of the known implications and open problems related to iso-enlargeability, \(\mathcal{KO}\)-width and the Rosenberg index for a closed spin manifold \(M\):

\begin{center}\small
$\begin{tikzcd}[cells={nodes={draw=black, rectangle,anchor=center,minimum height=2em}}, column sep=large, row sep=large]
 \text{(\(\Ahat\)-)enlargeable (in \(\dim k\))} \rar[Rightarrow, "\text{\cite{Hanke-Kotschick-Roe-Schick:Coarse-Top-Enlargeability,HankeSchick:Enlargeable,HankeSchick:EnlargeableInfinite}}"] \dar[Rightarrow, shift left=1ex, "\text{by definition}"] & \alpha(M) \neq 0 \\
 \text{(\(\Ahat\)-)iso-enlargeable (in \(\dim k\))} \rar[Rightarrow, "(k \geq 1)"',"\text{Thm.~\ref{thm:A-hat-iso-enlargeable-technical}}"] \uar[Rightarrow, dashed, orange, shift left=1ex, "?"]
 & \text{infinite \(\mathcal{KO}\)-width} \uar[Rightarrow, dashed, orange, "?", "\text{Conjecture~\ref{conj:KO-width}}"'] & \lar[Rightarrow, "\text{Ex.~\ref{ex:codim-2-obstr}}"']\text{codim.~2 index obstruction} \ular[Rightarrow, "\text{\cite{kubotaRelativeMishchenkoFomenko2019,KubotaSchick:Codim2}}"', bend right=20]
\end{tikzcd}$
\end{center}

\subsubsection*{Funding acknowledgement}
Funded by the Deutsche Forschungsgemeinschaft (DFG, German Research Foundation), Project-ID 427320536 – SFB 1442, as well as under Germany's Excellence Strategy EXC 2044 390685587, Mathematics M\"unster: Dynamics–Geometry–Structure.
Moreover, part of the research pertaining to this article was conducted while the author was employed at the University of G\"ottingen funded through the DFG RTG 2491 Fourier Analysis and Spectral Theory.

\pdfbookmark[1]{References}{ref}
\LastPageEnding


\begin{thebibliography}{99}
\footnotesize\itemsep=-0.5pt

\bibitem{Baum-Karoubi:real-BC}
Baum P., Karoubi M., On the {B}aum--{C}onnes conjecture in the real case,
 \href{https://doi.org/10.1093/qjmath/55.3.231}{\textit{Q.~J.~Math.}} \textbf{55} (2004), 231--235, \href{https://arxiv.org/abs/math.OA/0509495}{arXiv:math.OA/0509495}.

\bibitem{BrunnbauerHanke}
Brunnbauer M., Hanke B., Large and small group homology, \href{https://doi.org/10.1112/jtopol/jtq014}{\textit{J.~Topol.}}
 \textbf{3} (2010), 463--486, \href{https://arxiv.org/abs/0902.0869}{arXiv:0902.0869}.

\bibitem{Bunke:RelativeIndexCallias}
Bunke U., A {$K$}-theoretic relative index theorem and {C}allias-type {D}irac
 operators, \href{https://doi.org/10.1007/BF01460989}{\textit{Math. Ann.}} \textbf{303} (1995), 241--279.

\bibitem{Cecchini:CalliasTypePSC}
Cecchini S., Callias-type operators in {$C^*$}-algebras and positive scalar
 curvature on noncompact manifolds, \href{https://doi.org/10.1142/S1793525319500687}{\textit{J.~Topol. Anal.}} \textbf{12}
 (2020), 897--939, \href{https://arxiv.org/abs/1611.01800}{arXiv:1611.01800}.

\bibitem{Cecchini:LongNeck}
Cecchini S., A long neck principle for Riemannian spin manifolds with positive
 scalar curvature, \href{https://doi.org/10.1007/s00039-020-00545-1}{\textit{Geom. Funct. Anal.}} \textbf{30} (2020), 1183--1223,
 \href{https://arxiv.org/abs/2002.07131}{arXiv:2002.07131}.

\bibitem{CecchiniSchick:Enlargeable}
Cecchini S., Schick T., Enlargeable metrics on nonspin manifolds, \href{https://doi.org/10.1090/proc/14706}{\textit{Proc.
 Amer. Math. Soc.}}, {t}o appear, \href{https://arxiv.org/abs/1810.02116}{arXiv:1810.02116}.

\bibitem{Chang:CoarseObstrArithmetic}
Chang S., Coarse obstructions to positive scalar curvature in noncompact
 arithmetic manifolds, \href{https://doi.org/10.4310/jdg/1090348087}{\textit{J.~Dif\-ferential Geom.}} \textbf{57} (2001),
 1--21, \href{https://arxiv.org/abs/math.DG/0005115}{arXiv:math.DG/0005115}.

\bibitem{ChangWeinbergerYu:Taming3}
Chang S., Weinberger S., Yu G., Taming 3-manifolds using scalar curvature,
 \href{https://doi.org/10.1007/s10711-009-9402-1}{\textit{Geom. Dedicata}} \textbf{148} (2010), 3--14.

\bibitem{Ebert:EllipticRegularityDirac}
Ebert J., Elliptic regularity for {D}irac operators on families of noncompact
 manifolds, \href{https://arxiv.org/abs/1608.01699}{arXiv:1608.01699}.

\bibitem{Engel:WrongWay}
Engel A., Wrong way maps in uniformly finite homology and homology of groups,
 \href{https://doi.org/10.1007/s40062-017-0187-x}{\textit{J.~Homotopy Relat. Struct.}} \textbf{13} (2018), 423--441,
 \href{https://arxiv.org/abs/1602.03374}{arXiv:1602.03374}.

\bibitem{Engel:Rough}
Engel A., Rough index theory on spaces of polynomial growth and
 contractibility, \href{https://doi.org/10.4171/JNCG/335}{\textit{J.~Noncommut. Geom.}} \textbf{13} (2019), 617--666,
 \href{https://arxiv.org/abs/1505.03988}{arXiv:1505.03988}.

\bibitem{Gromov:MetricInequalitiesScalar}
Gromov M., Metric inequalities with scalar curvature, \href{https://doi.org/10.1007/s00039-018-0453-z}{\textit{Geom. Funct.
 Anal.}} \textbf{28} (2018), 645--726, \href{https://arxiv.org/abs/1710.04655}{arXiv:1710.04655}.

\bibitem{gromovFourLecturesScalar2019}
Gromov M., Four lectures on scalar curvature, \href{https://arxiv.org/abs/1908.10612}{arXiv:1908.10612}.

\bibitem{gromov-lawson:spin-and-scalar-in-presence}
Gromov M., Lawson Jr. H.B., Spin and scalar curvature in the presence of a
 fundamental group.~{I}, \href{https://doi.org/10.2307/1971198}{\textit{Ann. of Math.}} \textbf{111} (1980), 209--230.

\bibitem{GromovLawson:PSCDiracComplete}
Gromov M., Lawson Jr. H.B., Positive scalar curvature and the {D}irac operator
 on complete {R}iemannian manifolds, \href{https://doi.org/10.1007/BF02953774}{\textit{Inst. Hautes \'Etudes Sci. Publ.
 Math.}} \textbf{58} (1983), 83--196.

\bibitem{Hanke-Kotschick-Roe-Schick:Coarse-Top-Enlargeability}
Hanke B., Kotschick D., Roe J., Schick T., Coarse topology, enlargeability, and
 essentialness, \href{https://doi.org/10.24033/asens.2073}{\textit{Ann. Sci. \'Ec. Norm. Sup\'er.~(4)}} \textbf{41}
 (2008), 471--493, \href{https://arxiv.org/abs/0707.1999}{arXiv:0707.1999}.

\bibitem{HankePapeSchick:CodimensionTwoIndex}
Hanke B., Pape D., Schick T., Codimension two index obstructions to positive
 scalar curvature, \href{https://doi.org/10.5802/aif.3000}{\textit{Ann. Inst. Fourier (Grenoble)}} \textbf{65} (2015),
 2681--2710, \href{https://arxiv.org/abs/1402.4094}{arXiv:1402.4094}.

\bibitem{HankeSchick:Enlargeable}
Hanke B., Schick T., Enlargeability and index theory, \href{https://doi.org/10.4310/jdg/1175266206}{\textit{J.~Differential
 Geom.}} \textbf{74} (2006), 293--320, \href{https://arxiv.org/abs/math.GT/0403257}{arXiv:math.GT/0403257}.

\bibitem{HankeSchick:EnlargeableInfinite}
Hanke B., Schick T., Enlargeability and index theory: infinite covers,
 \href{https://doi.org/10.1007/s10977-007-9004-3}{\textit{$K$-Theory}} \textbf{38} (2007), 23--33, \href{https://arxiv.org/abs/math.GT/0604540}{arXiv:math.GT/0604540}.

\bibitem{Higson:CobordismInvariance}
Higson N., A note on the cobordism invariance of the index, \href{https://doi.org/10.1016/0040-9383(91)90024-X}{\textit{Topology}}
 \textbf{30} (1991), 439--443.

\bibitem{Hitchin:HarmonicSpinors}
Hitchin N., Harmonic spinors, \href{https://doi.org/10.1016/0001-8708(74)90021-8}{\textit{Adv. Math.}} \textbf{14} (1974), 1--55.

\bibitem{Joyce:Compact8Mfd}
Joyce D.D., Compact {$8$}-manifolds with holonomy {${\rm Spin}(7)$},
 \href{https://doi.org/10.1007/s002220050039}{\textit{Invent. Math.}} \textbf{123} (1996), 507--552.

\bibitem{kubotaRelativeMishchenkoFomenko2019}
Kubota Y., The relative {M}ishchenko--{F}omenko higher index and almost flat
 bundles~{II}: {A}lmost flat index pairing, \href{https://arxiv.org/abs/1908.10733}{arXiv:1908.10733}.

\bibitem{KubotaSchick:Codim2}
Kubota Y., Schick T., The {G}romov--{L}awson codimension~2 obstruction to
 positive scalar curvature and the {$C^*$}-index, \textit{Geom. Topol.}, {t}o
 appear, \href{https://arxiv.org/abs/1909.09584}{arXiv:1909.09584}.

\bibitem{LawsonMichelsohn:SpinGeometry}
Lawson Jr. H.B., Michelsohn M.L., Spin geometry, \textit{Princeton Mathematical
 Series}, Vol.~38, Princeton University Press, Princeton, NJ, 1989.

\bibitem{Lichnerwociz:Spineurs}
Lichnerowicz A., Spineurs harmoniques, \textit{C.~R.~Acad. Sci. Paris}
 \textbf{257} (1963), 7--9.

\bibitem{Mishchenko-Fomenko:Index}
Mi\v{s}\v{c}enko A.S., Fomenko A.T., The index of elliptic operators over
 {$C^{\ast} $}-algebras, \href{https://doi.org/10.1070/IM1980v015n01ABEH001207}{\textit{Math. USSR Izv.}} \textbf{15} (1980), 87--112.

\bibitem{NitscheSchickZeidler:Transfer}
Nitsche M., Schick T., Zeidler R., Transfer maps in generalized group homology
 via submanifolds, \href{https://arxiv.org/abs/1906.01190}{arXiv:1906.01190}.

\bibitem{Roe:IndexTheoryCoarseGeometry}
Roe J., Index theory, coarse geometry, and topology of manifolds, \textit{CBMS
 Regional Conference Series in Mathematics}, Vol.~90, \href{https://doi.org/10.1090/cbms/090}{Amer. Math. Soc.},
 Providence, RI, 1996.

\bibitem{Rosenberg:PSCNovikovI}
Rosenberg J., {$C^{\ast} $}-algebras, positive scalar curvature, and the
 {N}ovikov conjecture, \href{https://doi.org/10.1007/BF02953775}{\textit{Inst. Hautes \'Etudes Sci. Publ. Math.}}
 (1983), 197--212.

\bibitem{Rosenberg:PSCNovikovII}
Rosenberg J., {$C^\ast$}-algebras, positive scalar curvature and the {N}ovikov
 conjecture.~{II}, in Geometric Methods in Operator Algebras ({K}yoto, 1983),
 \textit{Pitman Res. Notes Math. Ser.}, Vol.~123, Longman Sci. Tech., Harlow,
 1986, 341--374.

\bibitem{Rosenberg:PSCNovikovIII}
Rosenberg J., {$C^\ast$}-algebras, positive scalar curvature, and the {N}ovikov
 conjecture.~{III}, \href{https://doi.org/10.1016/0040-9383(86)90047-9}{\textit{Topology}} \textbf{25} (1986), 319--336.

\bibitem{Rosenberg-Stolz:Manifolds-of-psc}
Rosenberg J., Stolz S., Manifolds of positive scalar curvature, in Algebraic
 Topology and its Applications, \textit{Math. Sci. Res. Inst. Publ.}, Vol.~27,
 \href{https://doi.org/10.1007/978-1-4613-9526-3_8}{Springer}, New York, 1994, 241--267.

\bibitem{RosenbergStolz:StableGLR}
Rosenberg J., Stolz S., A ``stable'' version of the {G}romov--{L}awson
 conjecture, in The \v{C}ech Centennial ({B}oston, {MA}, 1993),
 \textit{Contemp. Math.}, Vol.~181, \href{https://doi.org/10.1090/conm/181/02046}{Amer. Math. Soc.}, Providence, RI, 1995,
 405--418, \href{https://arxiv.org/abs/dg-ga/9407002}{arXiv:dg-ga/9407002}.

\bibitem{RosenbergStolz:PSCSurgeryConnections}
Rosenberg J., Stolz S., Metrics of positive scalar curvature and connections
 with surgery, in Surveys on Surgery Theory, {V}ol.~2, \textit{Ann. of Math.
 Stud.}, Vol.~149, Princeton University Press, Princeton, NJ, 2001, 353--386.

\bibitem{Schick:Counterexample}
Schick T., A counterexample to the (unstable) {G}romov--{L}awson--{R}osenberg
 conjecture, \href{https://doi.org/10.1016/S0040-9383(97)00082-7}{\textit{Topology}} \textbf{37} (1998), 1165--1168,
 \href{https://arxiv.org/abs/math.GT/0403063}{arXiv:math.GT/0403063}.

\bibitem{Schick:real-vs-complex}
Schick T., Real versus complex {$K$}-theory using {K}asparov's bivariant
 {$KK$}-theory, \href{https://doi.org/10.2140/agt.2004.4.333}{\textit{Algebr. Geom. Topol.}} \textbf{4} (2004), 333--346,
 \href{https://arxiv.org/abs/math.KT/0311295}{arXiv:math.KT/0311295}.

\bibitem{Schick:ICM}
Schick T., The topology of positive scalar curvature, in Proceedings of the
 {I}nternational {C}ongress of {M}a\-the\-maticians~-- {S}eoul 2014, {V}ol.~{II},
 Kyung Moon Sa, Seoul, 2014, 1285--1307, \href{https://arxiv.org/abs/1405.4220}{arXiv:1405.4220}.

\bibitem{SchickZadeh:MultiPart}
Schick T., Zadeh M.E., Large scale index of multi-partitioned manifolds,
 \href{https://doi.org/10.4171/JNCG/282}{\textit{J.~Noncommut. Geom.}} \textbf{12} (2018), 439--456,
 \href{https://arxiv.org/abs/1308.0742}{arXiv:1308.0742}.

\bibitem{SchoenYau:HypersurfaceMethod}
Schoen R., Yau S.T., On the structure of manifolds with positive scalar
 curvature, \href{https://doi.org/10.1007/BF01647970}{\textit{Manuscripta Math.}} \textbf{28} (1979), 159--183.

\bibitem{Stolz98Concordance}
Stolz S., Concordance classes of positive scalar curvature metrics, {P}reprint,
 1998, available at \url{http://www3.nd.edu/~stolz/concordance.ps}.

\bibitem{Stolz:SimplyConnected}
Stolz S., Simply connected manifolds of positive scalar curvature, \href{https://doi.org/10.2307/2946598}{\textit{Ann.
 of Math.}} \textbf{136} (1992), 511--540.

\bibitem{Stolz:ManifoldsOfPSC}
Stolz S., Manifolds of positive scalar curvature, in Topology of
 High-Dimensional Manifolds ({T}rieste, 2001), \textit{ICTP Lect. Notes},
 Vol.~9, Abdus Salam Int. Cent. Theoret. Phys., Trieste, 2002, 661--709.

\bibitem{Taubes:SeibergWitten}
Taubes C.H., The {S}eiberg--{W}itten invariants and symplectic forms,
 \href{https://doi.org/10.4310/MRL.1994.v1.n6.a15}{\textit{Math. Res. Lett.}} \textbf{1} (1994), 809--822.

\bibitem{Zeidler:IndexObstructionPositive}
Zeidler R., An index obstruction to positive scalar curvature on fiber bundles
 over aspherical manifolds, \href{https://doi.org/10.2140/agt.2017.17.3081}{\textit{Algebr. Geom. Topol.}} \textbf{17} (2017),
 3081--3094, \href{https://arxiv.org/abs/1512.06781}{arXiv:1512.06781}.

\bibitem{Zeidler:band-width-estimates}
Zeidler R., Band width estimates via the {D}irac operator,
 \textit{J.~Differential Geom.}, {t}o appear, \href{https://arxiv.org/abs/1905.08520}{arXiv:1905.08520}.

\end{thebibliography}
\end{document}